\newtheorem{thm}{Theorem}[section]
\newtheorem{prop}[thm]{Proposition \!\!}
\newtheorem{cor}[thm]{Corollary \!\!}
\newtheorem{lem}[thm]{Lemma \!\!}
\newtheorem{remark}[thm]{Remark \!\!}
\newtheorem{conjecture}[thm]{Conjecture \!\!}
\newtheorem{definition}[thm]{Definition}
\newcommand\mk{\medskip}
\newcommand\bk{\bigskip}
\newcommand{\pfend}{\hfill $\Box$ \medskip}
\begin{document}


\title{Geometrically regular weighted shifts}
\author{Chafiq Benhida}
\address{UFR de Math\'{e}matiques, Universit\'{e} des Sciences et
Technologies de Lille, F-59655, Villeneuve-d'Ascq Cedex, France}
\email{chafiq.benhida@univ-lille.fr}

\author{Ra\'ul E. Curto}
\address{Department of Mathematics, University of Iowa, Iowa City, Iowa 52242-1419, USA}
\email{raul-curto@uiowa.edu}
\urladdr{http://www.math.uiowa.edu/\symbol{126}rcurto/}

\author{George R. Exner}
\address{Department of Mathematics, Bucknell University, Lewisburg, Pennsylvania 17837, USA}
\email{exner@bucknell.edu}

\keywords{Weighted shift, Subnormal, Moment infinitely divisible, $k$--hyponormal, Subshift, Completely hyperexpansive}

\subjclass[2010]{Primary 47B20, 47B37; Secondary 44A60}

\begin{abstract}
We study a general class of weighted shifts whose weights $\alpha$ are given by $\alpha_n = \sqrt{\frac{p^n + N}{p^n + D}}$, where $p > 1$ and $N$ and $D$ are parameters so that $(N,D) \in (-1, 1)\times (-1, 1)$.  \ Some few examples of these shifts have appeared previously, usually as examples in connection with some property related to subnormality. \  In sectors nicely arranged in the unit square in $(N,D)$, we prove that these geometrically regular weighted shifts exhibit a wide variety of properties: moment infinitely divisible, subnormal, $k$-- but not $(k+1)$--hyponormal, or completely hyperexpansive, and with a variety of well-known functions (such as Bernstein functions) interpolating their weights squared or their moment sequences. \ They provide subshifts of the Bergman shift with geometric, not linear, spacing in the weights which are moment infinitely divisible. \ This new family of weighted shifts provides a useful addition to the library of shifts with which to explore new definitions and properties.\
\end{abstract}

\maketitle

\tableofcontents

\setcounter{tocdepth}{2}

\medskip
\section{Introduction and Statement of Main Results} \label{Intro}

Let $\mathcal{H}$ be a separable, complex infinite dimensional Hilbert space and $\mathcal{L}(\mathcal{H})$ denote the algebra of bounded linear operators on $\mathcal{H}$. \  Recall that an operator $T$ is \textit{subnormal} if it is the restriction to a (closed) invariant subspace of a normal operator, and \textit{hyponormal} if $T^* T \geq T T^*$. \ A frequently-studied class of operators is the class of unilateral weighted shifts $W_{\alpha}$ acting on the classical sequence space $\ell^2(\mathbb{N}_0)$. \  We define and study a family of such weighted shifts, the \textbf{geometrically regular weighted shifts (GRWS)}, with weight sequence
$$\alpha_n = \sqrt{\frac{p^n + N}{p^n + D}}, \quad n = 0, 1,2, \ldots$$
where $p > 1$ and $N$ and $D$ are constants. \   This family exhibits a wide variety of examples of the properties related to or extending subnormality;  in particular, some weighted shifts in this family are moment infinitely divisible ($\mathcal{MID}$ -- definitions reviewed below), some are ``merely'' subnormal, some are $k$--hyponormal for various positive integers $k$, some are completely hyperexpansive, and some fall into classes apparently not yet studied.   For a particular choice of $p$, the loci where  these various properties hold yield a geometrically pleasing diagram in the parameter space $(N,D)$ for these shifts. \

The motivation for their introduction is four-fold: \newline
(i) \ some choices of the pair $(N,D)$ provide new examples of $\mathcal{MID}$ shifts or subnormal shifts; \newline
(ii) \ particular examples of these shifts yield subshifts of the well-known Bergman shift with ``geometric'' spacing which are $\mathcal{MID}$, subnormal, or neither; \newline
(iii) \ instances of these shifts have appeared previously as examples or counter-examples when considering the Aluthge transform or Schur square root problem for shifts; \newline
(iv) \ finally, in the course of previous studies it has become clear to us that the example pool of standard families of weighted shifts is rather small, and, in particular, that the two most standard families (the shifts with finitely atomic Berger measures and the Agler shifts of which the Bergman shift is the best-known example) are so special as to be seriously misleading.

We turn next to setting notation and recalling definitions.

\medskip
\subsection{Unilateral weighted shifts}

We employ the standard notation for weighted shifts: let  $\ell^2$ be the classical Hilbert space of square summable complex sequences, with canonical orthonormal basis $e_0, e_1, e_2,  \ldots$ (note indexing begins at zero). \  Let $\alpha: \alpha_0, \alpha_1, \alpha_2, \ldots$ be a (bounded) positive \textbf{weight sequence} and  $W_\alpha$  the weighted shift defined by $W_\alpha e_j := \alpha_j e_{j+1} \;\; (j =0,1,2, \ldots)$ and extended by linearity. \ (While weighted shifts can be defined for any bounded sequence $\alpha$, without loss of generality for our questions of interest we can and do assume henceforth that $\alpha$ is positive.) \ The (unweighted) shift is the classical unilateral shift $U_+e_{j}:=e_{j+1} \; (j =0,1,\ldots )$ corresponding to the constant sequence in which $\alpha_n = 1$ for all $n$. \ The \textbf{moments} $\gamma = (\gamma_n)_{n=0}^\infty$ of the shift are defined by $\gamma_0 := 1$ and $\gamma_n := \prod_{j=0}^{n-1} \alpha_j^2$ for $n \geq 1$. \ It is well known from \cite[III.8.16]{Con} and \cite{GW} that a weighted shift $W_\alpha$ is subnormal if and only if it has a \textbf{Berger measure}, meaning a probability measure $\mu$ supported on $[0, \|W_\alpha\|^2]$ such that
$$
\gamma_n = \int_0^{\|W_\alpha\|^2} t^n d \mu(t), \hspace{.2in} n = 0, 1,2, \ldots .
$$

The canonical polar decomposition of $W_{\alpha}$ is $U_+ P_{\alpha}$, where $P_{\alpha}$ is the diagonal operator with diagonal entries $\alpha_0,\alpha_1,\alpha_2, \ldots$ . \ The \textbf{Aluthge transform} \cite{Al} of $W_{\alpha}$ is given by $AT(W_{\alpha}):=\sqrt{P_{\alpha}}U_+ \sqrt{P_{\alpha}}$, and this is the weighted shift with weight sequence $\sqrt{\alpha_0 \alpha_1},\sqrt{\alpha_1 \alpha_2},\sqrt{\alpha_2 \alpha_3},\ldots$ .

A \textbf{subshift} of a weighted shift $W_\alpha$ is a shift with weight sequence of the form $\alpha \circ g$, where $g :\mathbb{N}_0 \rightarrow \mathbb{N}_0$ is increasing and $\circ$ is composition. \  We say that the subshift is an \textbf{affine subshift} if $g$ is affine, of the form $g(n) = \ell n + r$, with $\ell$ a positive integer and $r$ a non-negative integer.

\medskip
\subsubsection{{\bf $k$--hyponormality}} \label{khypon} \ Recall that an operator $T$ is hyponormal if $T^*T-TT^* \ge 0$, and one computes easily that for a weighted shift $W_{\alpha}$ this is exactly $\alpha_n^2 \le \alpha_{n+1}^2$ for all $n =0,1,\ldots$. \   It is the Bram-Halmos characterization of subnormality (\cite{Br}) that an operator $T$ is subnormal if and only if, for every $k = 1, 2, \ldots$, a certain $(k+1) \times (k+1)$ operator matrix $A_n(T)$ is positive. \ For $k \ge 1$, an operator is $k$--hyponormal if this positivity condition holds for $k$. \ It is well known from \cite[Theorem 4]{Cu} that for weighted shifts $k$--hyponormality reduces to the positivity, for each $\ell$, of the $(k+1) \times (k+1)$ (Hankel) moment matrix $M_{\gamma}(\ell,k)$, where
$$
M_{\gamma}(\ell,k) = \left(
\begin{array}{cccc}
\gamma _{n} & \gamma _{n+1} & \cdots & \gamma _{n+k} \\
\gamma _{n+1} & \gamma _{n+2} & \cdots & \gamma _{n+k+1} \\
\vdots & \vdots & \ddots & \vdots \\
\gamma _{n+k} & \gamma _{n+k+1} & \cdots & \gamma _{n+2k}%
\end{array}
\right).
$$

\medskip
\subsubsection{{\bf $n$--contractivity, sequence and function monotonicity, and related properties}} \ For a contractive operator $T$ (that is, $\|T \| \leq 1$), there is another approach to subnormality, namely the Agler-Embry characterization based on the notion of $n$--contractivity. \  For $n \ge 1$, an operator is $n$\textbf{--contractive} if
\begin{equation} \label{eq11}
A_n(T):=\sum_{i=0}^n (-1)^i \binom{n}{i} {T^*}^i T^i \geq 0 \quad \; \textrm{(cf. \cite{Ag})}.
\end{equation}
From \cite{Ag} it is known that a contractive operator is subnormal if and only if it is $n$--contractive for all positive integers $n$. \ For a weighted shift one computes readily that it suffices to test this condition on basis vectors and that a weighted shift is $n$--contractive if and only if
\begin{equation} \label{cond100}
\sum_{i=0}^n (-1)^i \binom{n}{i} \gamma_{k+i} \geq 0, \qquad k = 0, 1,2, \ldots.
\end{equation}

Given a sequence $a = (a_j)_{j=0}^\infty$, let $\nabla$ (the \textbf{forward difference operator}) be defined by
$$
(\nabla a)_j := a_j - a_{j+1},
$$
and the iterated forward difference operators $\nabla^{n}$ by
$$
\nabla^{0} a := a \; \; \textrm{ and } \; \; \nabla^{n} := \nabla (\nabla^{n-1}),
$$
for $n \geq 1$. \ For example,
\begin{equation} \label{nabla2}
(\nabla^2a)_j=a_j-2a_{j+1}+a_{j+2} \; \; (j =0,1,\ldots).
\end{equation}

Using $\nabla$ and $\Delta := -\nabla$ there is alternative language for $n$--contractivity and related notions:  a sequence $a$ is $n$\textbf{--monotone} if $(\nabla^{n} a)_k \geq 0$ for all $k = 0, 1,2, \ldots$, $n$\textbf{--hypermonotone} if it is $j$--monotone for all $j = 1, \ldots, n$, and \textbf{completely monotone} if it is $n$--monotone for all $n = 1, 2, \ldots$. \  \label{defn} As well, a sequence is $n$\textbf{--alternating} if $(\nabla^{n} a)_k \leq 0$ for all $k = 0, 1,2, \ldots$, $n$\textbf{--hyperalternating} if it is $j$--alternating for all $j = 1, \ldots, n$, and \textbf{completely alternating} if it is $n$--alternating for all $n = 1, 2, \ldots$. \  A sequence is $n$\textbf{--log monotone} (respectively, \textbf{completely log monotone}, $n$\textbf{--log alternating}, \textbf{completely log alternating}) if the sequence $(\ln a_j)$ is $n$--monotone (respectively, completely monotone, $n$--alternating, completely alternating). \ (Notice that we define log monotonicity using $n$--monotonicity, so we allow for the possibility that the sequence is negative, as in the case of $\ln a$, where $a$ is the sequence of moments of a contractive weighted shift; thus, the sequence $a$ is $n${--log monotone} is not quite that the sequence $\ln (a)$ is $n${--monotone}. \ As a result, we differ slightly from the more restrictive definition given in \cite[Definition 6.1]{BCR}.)  It is shown in \cite{BCE1} that if a sequence is completely alternating, it is log completely alternating, and that the reverse is not true in general.

There are strongly related analogs of the properties above for functions. \  A function $f : \mathbb{R}_+ \rightarrow \mathbb{R}_+$ is \textbf{completely monotone} if its derivatives alternate in sign so that $f^{(2n+1)} \leq 0$ ($n = 0, 1,2, \ldots$) and $f^{(2n)} \geq 0$ ($n =  1,2, \ldots$). \  A function $f : \mathbb{R}_+ \rightarrow \mathbb{R}_+$ is a \textbf{Bernstein} function if $f'$ is completely monotone. \  It is immediate that if $g = 1 - f$, with $g$ nonnegative and $0 \le f \le 1$ completely monotone, then $g$ is Bernstein. \ We define $f$ to be log completely monotone [respectively, log Bernstein] if $\ln f$ is completely monotone [respectively, Bernstein] but again with the relaxation that we do not require $\ln f$ to be nonnegative, but insist only on the appropriate alternation of signs of the derivative. \  It is well known that sequences interpolated by one of these classes of functions inherit the analogous sequence property;  for example, if a moment sequence is interpolated by a log completely monotone function, then the sequence is log completely monotone (and the corresponding shift is thereby $\mathcal{MID}$ as discussed below). \  We will sometimes refer to a function which satisfies the alternating derivatives condition to be a Bernstein function, but which is potentially negative on $\mathbb{R}_+$, as a \textbf{generalized} Bernstein function.

\medskip
\subsubsection{{\bf Moment infinitely divisible ($\mathcal{MID}$) shifts}} \ In \cite{BCE1} and \cite{BCE3} the authors considered a class of ``better than subnormal'' weighted shifts. \  We say that a shift $W_\alpha$ is \textbf{moment infinitely divisible} ($\mathcal{MID}$) if, for every $s > 0$, the ``Schur $s$--th power'' shift with weight sequence $(\alpha_n^{s})$ is subnormal. \  It is easily seen that raising weights to the power $s$ and raising moments to the power $s$ are equivalent. \  Examples of $\mathcal{MID}$ shifts include the Agler shifts $A_j$, $j = 1, 2, \ldots$ (where $A_j$ has the weight sequence $\sqrt{\frac{n+1}{n+j}}$), with $j=1$ yielding the unweighted shift and $j=2$ the familiar Bergman shift \cite{BCE1}. \  This can be generalized to the so-called homographic shifts $S(a,b,c,d)$ with weight sequence $\sqrt{\frac{a n + b}{c n + d}}$ where $a$, $b$, $c$, and $d$ are positive real numbers such that $ad - bc > 0$  (see \cite{CPY} and, for example, \cite{CD}). \  Except in very special cases, weighted shifts with finitely atomic Berger measures are not ($\mathcal{MID}$);  indeed, from arguments concerning the support of the measures, they will lack subnormality for some $s = \frac{1}{n}$ (that is, lack some Schur $n$--th root;  see \cite{SS} and \cite{CE}). \  The contractive $\mathcal{MID}$ shifts are those with weights squared log completely alternating or, equivalently, with moments log completely monotone (\cite{BCE1} and \cite{BCE3} respectively). \  It follows that if the weights squared sequence is interpolated by a Bernstein function (and so is completely alternating) the weights squared sequence is log completely alternating, and therefore the shift is $\mathcal{MID}$; however, we do not know what operator theoretic properties are improved by this stronger condition.

The $\mathcal{MID}$ shifts are robust under a variety of transformations; the results that follow are from \cite{BCE1} and \cite{BCE3}. \  One may generate the shift with weights quotients of successive weights of the original and it remains $\mathcal{MID}$, and the Aluthge transform maps the class $\mathcal{MID}$ bijectively to itself. \  The class is stable under appropriate limits of weight sequences or of Berger measures. \  Affine subshifts of $\mathcal{MID}$ are $\mathcal{MID}$. \  It is known that the shift with weights the reciprocals of those of a completely hyperexpansive shift is subnormal (\cite{At});  in fact such a shift is contractive $\mathcal{MID}$. \  (Taking the reciprocal of the weight sequence of a contractive subnormal (or even $\mathcal{MID}$) shift need not be completely hyperexpansive per the example in \cite{At}, and it remains open how to distinguish when this transformation does and does not work to produce a completely hyperexpansive shift.)  We note, although we do not pursue it further in this paper, that every example of a $\mathcal{MID}$ shift yields, via the $k$--hyponormality characterization of subnormality, a family of infinitely divisible matrices in the sense of \cite{Bh}.

\medskip
\subsection{Geometrically regular weighted shifts (GRWS)}

Here, we record only the definition;  motivation for and discussion of the definition are to be found in Section \ref{se:2}.

\begin{definition}
For $p > 0$, $-1 < N < 1$ and $-1 < D < 1$, we define the \textbf{geometrically regular weighted shift} with parameters $p$ and $(N,D)$ (often \textbf{GRWS}) to be the weighted shift with weight sequence $\alpha$, where
$$\alpha_n = \sqrt{\frac{p^n + N}{p^n + D}}, \quad n = 0, 1,2, \ldots\, .$$
\end{definition}

\medskip
\subsection{Statement of main results}

We state results for geometrically regular weighted shifts in reference to the following diagram, consisting of the open unit square in $\mathbb{R}^2$ with parameter pair $(N,D)$, and with Sectors I, II, \ldots, the ray $D = p N$ in Sector VIII and the rays of the form $D = p^n N$ in Sector IV. \ (Hereafter, by a sector we mean the convex cone in the open unit square with vertex at $(0,0)$ and bounded by two rays emanating from the center. \ All sectors in this paper will be closed in the relative topology of the open unit square.)

\bk
\begin{tikzpicture}[scale=4]

\draw[->] (-1.3,0) -- (1.3,0) ;

\draw[->] (0,-1.3) -- (0,1.3) ;

\draw  [-, dashed] (-1,-1)--(1,-1) --(1,1)--(-1,1)--cycle  ;

\draw (1,0) node[below right] {1} ;

\draw (-1,0) node[below left] {-1} ;

\draw (0,1) node[above left ] {1} ;

\draw (0,-1) node[below left] {-1} ;


\draw [][  domain=-1:1] plot(\x,{\x })node[right, scale=0.8] {$D=N$} ;

\draw [][  domain=-1:1] plot(\x,{-\x })node[right, scale=0.8] {$D=-N$} ;

\draw [blue ][  domain=-2/3:2/3] plot(\x,{3/2*\x })node[above right, scale=0.65] {$D=pN$} ;

\draw [-,blue, dashed][  domain=0:4/9] plot(\x,{9/4*\x })node[above , scale=0.6] {$D=p^2N$} ;

\draw [-, blue, dashed][  domain=0:8/27] plot(\x,{27/8*\x })node[above, scale=0.8] {} ;

\draw [-, blue, dashed][  domain=0:16/81] plot(\x,{81/16*\x })node[above, scale=0.8] {} ;

\fill[color=red!80] (0,0) -- (-1,-1) -- (-2/3,-1)  -- cycle ;

\fill[color=green] (0,0) -- (-1,-1) -- (-1,0)  -- cycle ;

\fill[green!40] (0,0) -- (-1,0) -- (-1,1)  -- cycle ;

\fill[blue!75] (0,0) -- (0,1) -- (-1,1)  -- cycle ;

\draw [green] (0,0) -- (1,1);

\draw (-0.6, -0.3) node [ ] {I};

\draw (-0.6, 0.3) node [ ] {II};

\draw (-0.3, 0.6) node [ ] {III};

\draw (0.3, 0.6) node [ ] {IV};

\draw (0.6, 0.3) node [ ] {V};

\draw (0.6, -0.3) node [ ] {VI};

\draw (0.3, -0.6) node [ ] {VII};

\draw (-0.3, -0.6) node [ ] {VIII};

\draw (-0.8, -0.9) node [ ] {A};

\draw (0, -1.5) node[below  ] {Fig. 1};

\end{tikzpicture}



\bk
We remark that the condition $-1 < N$ and $-1 < D$ avoids difficulties with the weight when $n = 0$, and it is convenient to require as well $N < 1$ and $D < 1$.

\begin{thm}
Let $p > 1$ and $(N, D)$ such that $-1 < N < 1$ and $-1 < D < 1$. \  Let $W_\alpha$ be a geometrically regular weighted shift with parameters $p$ and $(N,D)$; that is,
$$\alpha_n :=  \sqrt{\frac{p^n + N}{p^n + D}}, \quad n = 0, 1,2, \ldots .$$
Then $W_\alpha$ has the following properties in the sectors given in the diagram:
\begin{enumerate}
      \item Along the main diagonal $D=N$, $W_\alpha$ is the (unweighted) unilateral shift and thereby both $\mathcal{MID}$ and completely hyperexpansive;
     \mk \item In Sector {\rm I}, the weight squared sequence $\alpha$ is interpolated by a Bernstein function and $W_\alpha$ is $\mathcal{MID}$;
      \mk \item In Sector {\rm II}, the weight squared sequence $\alpha$ is interpolated by a log Bernstein function and $W_\alpha$ is $\mathcal{MID}$, and at least one shift in this sector does not have $\alpha$ interpolated by a Bernstein function;
      \mk \item In Sector {\rm III}, $W_\alpha$ is subnormal, and at least one shift in this sector is not $\mathcal{MID}$;
      \mk \item In Sector {\rm IV}, $W_\alpha$ is subnormal with finitely atomic Berger measure along the rays $D = p^k N$, $k = 0, 1, 2, \ldots$, and is not $\mathcal{MID}$ except for $N=D$;
      \mk \item In Sector {\rm VIIIA}, $W_\alpha$ is completely hyperexpansive.
    \end{enumerate}
\end{thm}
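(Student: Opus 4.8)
The plan is to pass to the variable $q := 1/p \in (0,1)$, so that $\alpha_n^2 = \frac{1+Nq^n}{1+Dq^n}$ and the moments become $\gamma_n = \prod_{j=0}^{n-1}\frac{1+Nq^j}{1+Dq^j}$. All six assertions then flow from two engines. The first is a single ``geometric'' discrete measure supported on $\{q^k\}_{k\ge0}\subset(0,1]$, which simultaneously governs subnormality, finite atomicity, and complete hyperexpansivity. The second is the Dirichlet (exponential) expansions of $f(x):=\frac{p^x+N}{p^x+D}$, of $\ln f$, and of their first differences, read through the uniqueness of the representing measure, which govern the Bernstein, log-Bernstein, and $\mathcal{MID}$ properties. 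Item (1) is immediate: $D=N$ forces $\alpha_n\equiv 1$, the unweighted shift, whose moments $\gamma_n\equiv 1$ are at once completely monotone and completely alternating.

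For the discrete measure I would seek $\mu=\sum_{k\ge0}\rho_k\delta_{q^k}$ with $\gamma_n=\sum_k\rho_k q^{kn}$. Substituting this ansatz into the defining relation $(1+Dq^n)\gamma_{n+1}=(1+Nq^n)\gamma_n$ and matching powers of $w=q^n$ yields the two-term recurrence $\rho_k=\rho_{k-1}\frac{N-Dq^{k-1}}{q^k-1}$, hence $\rho_k=\rho_0\prod_{i=1}^{k}\frac{N-Dq^{i-1}}{q^i-1}$; normalizing by $\gamma_0=\sum_k\rho_k=1$ and checking convergence (the ratio tends to $-N$, of modulus $<1$) shows that $\sum_k\rho_k q^{kn}$ satisfies the same recurrence and initial value as $\gamma_n$, so the two sequences coincide. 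The sign of each factor equals the sign of $Dq^{i-1}-N$, so all $\rho_k\ge 0$ precisely in Sectors I--III, giving subnormality there; along a ray $D=p^kN$ one factor vanishes and $\mu$ collapses to $k+1$ positive atoms, which is the finitely atomic Berger measure of (5). For (6) I would instead represent the forward differences: since $\gamma_{n+1}-\gamma_n=\sum_k\rho_k(q^k-1)q^{kn}$, the sequence $\Delta\gamma$ is a Hausdorff (completely monotone) moment sequence exactly when $\sigma_k:=\rho_k(q^k-1)\ge 0$, i.e.\ when $\rho_k\le 0$ for all $k\ge 1$; the constraint $pN\le D\le N$ defining Sector VIIIA makes the first factor negative and all later factors nonnegative, so $\Delta\gamma$ is completely monotone, $\gamma$ is completely alternating, and $W_\alpha$ is completely hyperexpansive.

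For the (log-)Bernstein and $\mathcal{MID}$ assertions I would expand $f(n)=1-(D-N)\sum_{m\ge1}(-D)^{m-1}q^{mn}$ and $\ln f(n)=\sum_{m\ge1}\frac{(-1)^{m-1}}{m}(N^m-D^m)q^{mn}$, both valid for $n\ge 0$. A weights-squared sequence is interpolated by a Bernstein function only if it is completely alternating, equivalently only if $-\nabla\alpha^2$ is a completely monotone sequence; by determinacy of the Hausdorff moment problem on $[0,1]$ its representing measure is unique, so it must equal the explicit atomic measure read off from the expansion, and the property holds if and only if every atom is nonnegative. This reduces each claim to a sign pattern: the atoms for $\alpha^2$ carry the sign of $(-D)^{m-1}$, nonnegative for all $m$ exactly when $D\le 0$ (Sector I, giving (2)); the atoms for $\ln\alpha^2$ carry the sign of $(-1)^{m-1}(D^m-N^m)$, nonnegative for all $m$ exactly when $D\ge N$ and $|D|\le|N|$ (Sectors I--II, giving the $\mathcal{MID}$ halves of (2)--(3)). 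The function interpolants come from the same positive coefficients (equivalently, $\frac{1}{p^x+D}=\sum_{m\ge1}(-D)^{m-1}p^{-mx}$ is completely monotone when $D\le 0$, making $f$ Bernstein, and the coefficient positivity makes $(\ln f)'$ completely monotone, making $\ln f$ log-Bernstein). The same bookkeeping yields the negative statements: in Sector II ($D>0$) the $m=2$ atom for $\alpha^2$ is strictly negative, so no Bernstein function interpolates the weights squared; and in Sectors III--IV the even-index atom for $\ln\alpha^2$ is negative (there $D^m-N^m>0$ while $(-1)^{m-1}=-1$), so $W_\alpha$ is not $\mathcal{MID}$. In fact this argument is uniform across those sectors, which contains both the ``at least one'' statements and the ``except $N=D$'' statement of (3)--(4).

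I expect the main obstacle to be the negative direction. Exhibiting $\mathcal{MID}$ or a Bernstein interpolant is comfortable, since a single explicit completely monotone representation suffices; ruling these out is delicate because failure of the natural interpolating \emph{function} to be (log-)Bernstein does not by itself deny the discrete \emph{sequence} property. The device that makes the negative claims rigorous—and that I would develop carefully—is determinacy/uniqueness of the representing measure (Hausdorff on $[0,1]$, Laplace on $\mathbb{R}_+$): because the Dirichlet expansion is itself a convergent representation, it must be the representing measure, so a single atom of the wrong sign genuinely obstructs complete monotonicity of the difference sequence throughout the sector. A secondary point requiring care is the relative-closure convention: the assertions are made on closed sectors inside the open square, so the boundary rays $D=N$, $D=\pm N$, and $D=pN$ must be included by continuity or by checking the limiting sign conditions, and convergence of all the series and products near these edges (where $|N|$ or $|D|$ approaches $1$) must be confirmed.
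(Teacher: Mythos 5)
Your proposal is correct, and where it overlaps the paper it essentially coincides: your atomic measure $\sum_k \rho_k \delta_{q^k}$ with $\rho_k = \rho_{k-1}\frac{N-Dq^{k-1}}{q^k-1}$ is, after rewriting in $p$, exactly the recursion $m_i = \frac{p(D-p^{i-1}N)}{p^i-1}$ of Proposition \ref{prop:measureSectorIII}, obtained the same way (ansatz, coefficient matching, ratio test, normalization), and it handles Sector III and the collapse to $k+1$ atoms on the rays $D=p^kN$ identically. Elsewhere your route is genuinely different and buys more. For Sectors I and II the paper proves complete monotonicity of $\frac{1}{p^x-a}$ and of $(\ln f)'$ by inductive derivative bookkeeping (the completely monotone family lemma and Lemma \ref{le:afnisCM}); you instead expand $\frac{1}{p^x+D}=\sum_{m\ge1}(-D)^{m-1}p^{-mx}$ and $(\ln f)'=\ln p\sum_{m\ge1}(-1)^{m-1}(D^m-N^m)q^{mx}$ and read the Bernstein/log-Bernstein properties off nonnegative Dirichlet coefficients, which localizes the sign conditions exactly ($D\le0$; resp.\ $D\ge N$ and $|D|\le|N|$). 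For Sector VIIIA the paper reduces to Sector I via the quotient weights $w_n=\frac1p\cdot\frac{p^n+N}{p^n+D/p}$ and log complete alternation, whereas you argue directly from $\sigma_k=\rho_k(q^k-1)\ge0$; your sign check ($m_1\le0$, $m_i\ge0$ for $i\ge2$ when $pN\le D\le N$) is right, so both work. Most notably, your uniqueness device — the difference sequences $\Delta\alpha^2$ and $\Delta\ln\alpha^2$ are moments of explicit finite \emph{signed} measures on $[0,1]$, and determinacy of the Hausdorff problem (valid for signed measures of finite total variation, by density of polynomials in $C[0,1]$) forces any positive representing measure to coincide with them — makes the negative statements rigorous: no Bernstein interpolant in the interior of Sector II, and non-$\mathcal{MID}$ throughout the interiors of Sectors III and IV off the diagonal. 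The paper proves non-$\mathcal{MID}$ only on the Sector IV rays (via the support arguments of \cite{SS} and \cite{CE}) and supports the Sector II/III negative clauses largely by \textit{Mathematica} experiments; your argument subsumes the two ``at least one'' clauses of items (3)--(4) and in fact would settle parts (1)--(2) of Conjecture \ref{conjectures}, so it deserves to be written out in full. When you do, make three small points explicit: absolute convergence ($\sum_k|\rho_k|<\infty$ since the ratio tends to $-N$ with $|N|<1$, and geometric decay of the Dirichlet coefficients since $|N|,|D|<1$), so the signed measures have finite total variation and all rearrangements are licit; contractivity ($\alpha_n\le1$, which holds in Sectors I--IV since $N\le D$), needed to invoke the characterization of $\mathcal{MID}$ by log completely alternating weights squared; and the phrasing ``all $\rho_k\ge0$ precisely in Sectors I--III'' should be amended, since nonnegativity also holds on the Sector IV rays $D=p^kN$ (where the tail vanishes), consistent with your separate treatment of those rays.
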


We note as well the following conjectures as suggested by the above.

\begin{conjecture}   \label{conjectures}
Let $p > 1$ and $(N, D)$ such that $-1 < N < 1$ and $-1 < D < 1$. \  Let $W_\alpha$ be a weighted shift with weight sequence $\alpha$, where
$$\alpha_n :=  \sqrt{\frac{p^n + N}{p^n + D}}, \quad n = 0, 1,2, \ldots .$$
Then $W_\alpha$ has the following properties in the sectors given in the diagram:
\begin{enumerate}
      \item In Sector {\rm II}, except on the boundary with Sector {\rm I}, no $\alpha$ is interpolated by a Bernstein function;
      \item In Sector {\rm III}, $W_\alpha$ is subnormal, and except on the boundary with Sector {\rm II}, no shift is $\mathcal{MID}$;
      \item Other than as specified above, there are no $\mathcal{MID}$, subnormal, or completely hyperexpansive shifts in the square.
    \end{enumerate}
\end{conjecture}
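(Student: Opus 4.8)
The plan is to base everything on three structural identities and one master expansion, and then to order the sectors by increasing difficulty. Setting $q := 1/p \in (0,1)$ and $L := \ln p$, the moments factor as $q$-shifted factorials,
$$\gamma_n = \prod_{j=0}^{n-1}\frac{p^j+N}{p^j+D} = \frac{\prod_{j=0}^{n-1}(1+Nq^j)}{\prod_{j=0}^{n-1}(1+Dq^j)};$$
moreover the family is self-similar (deleting the first weight of GRWS$(N,D)$ yields GRWS$(N/p,D/p)$) and closed under reciprocals (the reciprocal weight sequence of GRWS$(N,D)$ is that of GRWS$(D,N)$, i.e.\ a reflection across $D=N$). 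The master computation is the expansion of the logarithmic weight function $f(t) := \frac{p^t+N}{p^t+D}$: one obtains
$$(\ln f)'(t) = L\sum_{m\ge1} c_m e^{-mLt}, \qquad c_m := (-1)^{m-1}(D^m-N^m),$$
and, sampling at $t=n$ and using $\nabla^j(q^{mn}) = q^{mn}(1-q^m)^j$,
$$(\nabla^j \ln\alpha^2)_n = -\sum_{m\ge1}\frac{c_m}{m}\,q^{mn}(1-q^m)^j \qquad (j\ge1).$$
Since $q^m\in(0,1)$, the signs of the $c_m$ govern log-complete-alternation, hence (by the $\mathcal{MID}$ criterion recalled above) the $\mathcal{MID}$ property.

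Part (1) is immediate, since $D=N$ forces all weights equal to $1$ and the constant moment sequence is simultaneously completely monotone and completely alternating. For Sector I, I would show $f$ is Bernstein: $f\ge0$ is clear because $p^t\ge1>\max(|N|,|D|)$, and writing $f'(t)=(D-N)L\,G(e^{-Lt})$ with $G(v)=v/(1+Dv)^2$, the expansion $G(e^{-s})=\sum_{m\ge1}m(-D)^{m-1}e^{-ms}$ has nonnegative coefficients precisely because $D<0$ there; thus $\alpha^2$ is completely alternating, a fortiori log-completely-alternating, so $W_\alpha$ is $\mathcal{MID}$. For Sector II the $c_m$ expansion gives $c_m\ge0$ throughout: for odd $m$ this reads $D>N$, and for even $m$ it reads $|N|\ge D$, i.e.\ $D\le -N$, which is exactly the anti-diagonal bound defining the sector; hence $\ln f$ is a generalized Bernstein function, $\alpha^2$ is log-completely-alternating, and $W_\alpha$ is $\mathcal{MID}$. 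The ``not Bernstein'' assertion I would dispatch by exhibiting one point with $D>0$ at which a finite check gives $(\nabla^2\alpha^2)_k>0$ for some $k$, so that $\alpha^2$ is not completely alternating and therefore cannot be interpolated by any Bernstein function.

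Sectors III and IV rest on the $q$-shifted factorial form. On a ray $D=p^kN$ with $N>0$ the numerator and denominator products share all but $k$ factors; after cancellation
$$\gamma_n = \frac{1}{\prod_{i=1}^k(1+Np^i)}\prod_{l=1}^{k}\bigl(1+Np^{l}q^n\bigr),$$
a degree-$k$ polynomial in $q^n$, so $\gamma_n=\sum_{m=0}^k c_m (q^m)^n$ with $c_m>0$ (elementary symmetric functions of the positive numbers $Np,\dots,Np^k$); the Berger measure is then the $(k+1)$-atom probability measure $\sum_{m=0}^k c_m\delta_{q^m}$, giving subnormality with finite atomic support. The $k=0$ ray is the diagonal and is $\mathcal{MID}$, whereas each ray with $k\ge1$ has at least two atoms and is therefore not $\mathcal{MID}$, by the Schur-root obstruction for finitely atomic measures (\cite{SS} and \cite{CE}). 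The subnormality claim throughout Sector III is the step I expect to be hardest: here I would realize $\gamma_n=(-N;q)_n/(-D;q)_n$ as the moment sequence of a measure supported on the geometric set $\{q^m:m\ge0\}$, computing its atomic weights as residues of the generating function $\sum_n\gamma_n z^n=\sum_m w_m/(1-q^mz)$ through a little $q$-Jacobi (basic hypergeometric) identity, and checking that they are nonnegative exactly on $\{N<0<D,\ D>|N|\}$; failing a usable closed form, the fallback is to verify $k$-hyponormality for every $k$ by showing $M_\gamma(\ell,k)\ge0$ directly from the Pochhammer recursion. That some shift in Sector III is not $\mathcal{MID}$ I would settle by a finite computation at one point with $D>|N|$ showing $\ln\alpha^2$ fails complete alternation, consistent with the even-index $c_m=(-1)^{m-1}(D^m-N^m)$ being negative there.

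Finally, for Sector VIIIA I would prove complete hyperexpansivity by showing the moment sequence is completely alternating, equivalently that $\gamma_\infty-\gamma_n$ (with $\gamma_n\nearrow\gamma_\infty<\infty$, since $\alpha_n^2\ge1$ when $D\le N$) is a completely monotone sequence. The first difference is controlled immediately, $\nabla\gamma_n=\gamma_n\frac{D-N}{p^n+D}\le0$, and on the boundary ray $D=pN$ the telescoping above collapses to the two-term form $\gamma_n=a+b\,q^n$ with $a>0$ and $b<0$, for which $\nabla^m\gamma_n=b\,q^n(1-q)^m\le0$ at every order; the content is to propagate this negativity into the interior $pN<D<N$, which I would do by extracting a nonnegative representing measure $\gamma_\infty-\gamma_n=\int_0^1 t^n\,d\sigma(t)$ from the $q$-product, with the positivity of $\sigma$ persisting exactly down to $D=pN$. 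The two genuine obstacles are thus the Berger-measure construction in Sector III and the all-orders sign propagation for Sector VIIIA; the remaining sectors reduce cleanly to the sign of the single coefficient family $c_m=(-1)^{m-1}(D^m-N^m)$.
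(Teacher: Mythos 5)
You should first be aware that the statement you were asked to prove is stated in the paper as a \emph{conjecture} and is not proven there: the paper's only evidence for it is numerical and graphical experimentation with \textit{Mathematica}, and what the paper actually proves are the weaker, existential versions in its main theorem (``at least one shift in Sector II does not have $\alpha$ interpolated by a Bernstein function,'' ``at least one shift in Sector III is not $\mathcal{MID}$''). Your proposal does not close the gap either. The parts you carry out convincingly --- Sector I via the expansion of $f'$, Sector II via the signs of $c_m=(-1)^{m-1}(D^m-N^m)$, the telescoping on the rays $D=p^kN$ to an explicit $(k+1)$-atomic Berger measure with atoms at $q^m$ and elementary-symmetric weights, and the Sector VIIIA reduction --- are the \emph{positive} assertions of the paper's Theorem 1.2, which the paper proves by different means (completely monotone families and an induction on derivative forms; a recursive identification of the countably atomic measure in Sector III; passing to $\Delta\gamma$ and Sector I for VIIIA); your Dirichlet-coefficient and $q$-shifted-factorial route is in places genuinely cleaner. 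But for the conjecture's actual content, which consists of \emph{universal} negatives over open two-dimensional sectors, you offer only single-point finite checks: exhibiting one point with $(\nabla^2\alpha^2)_k>0$, or one failed log-alternation test in Sector III, establishes exactly the ``at least one'' statements already proven, not ``no $\alpha$ in open Sector II is interpolated by any Bernstein function'' and ``no shift in open Sector III is $\mathcal{MID}$.'' No fixed finite-order test can work uniformly --- the paper itself records that the order of the test needed to discard a point grows as one approaches the Sector I boundary --- so a proof must produce, for \emph{every} $(N,D)$ in the open sector, indices $n,j$ (depending on the point) at which $\sum_{m\ge1}\frac{c_m}{m}\,q^{mn}(1-q^m)^j$ has the wrong sign. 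Your own expansion shows why this is delicate: once $D>-N$, uniqueness of Dirichlet coefficients forces some $c_m<0$, so $\ln f$ fails to be a generalized Bernstein \emph{function}; but that does not preclude the integer samples $(\ln\alpha_n^2)$ from being completely alternating, nor does failure of your particular interpolant $f$ preclude some \emph{other} Bernstein function interpolating the sequence $(\alpha_n^2)$. The sampled sums can be nonnegative even when some coefficients $c_m$ are negative, and this function-versus-sequence gap is precisely why the statement remains a conjecture.

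Item (3) of the conjecture is essentially untouched by your proposal. Below the diagonal ($D<N$) the weights exceed $1$ and decrease, so hyponormality --- hence subnormality and the $\mathcal{MID}$ property --- fails trivially; this is worth saying, but it does not dispose of complete hyperexpansivity, since completely hyperexpansive shifts have exactly this weight profile (weights $\ge 1$ and decreasing). Thus the genuinely open part of item (3) --- that there are no completely hyperexpansive shifts in Sectors V, VI, VII, nor in Sector VIII outside the subsector VIIIA bounded by $D=N$ and $D=pN$ --- receives no argument at all: your VIIIA discussion, including the two-term collapse $\gamma_n=a+bq^n$ on the boundary ray and the hoped-for ``sign propagation'' into the interior, addresses where complete hyperexpansivity \emph{does} hold, which is again the proven theorem. (Non-subnormality in Sector IV off the rays, also implicitly needed for item (3), is handled in the paper by the condensation-method determinant formula and the $k$-hyponormality theorem, which your plan does not engage.) In short: as a re-derivation of the paper's proven results your expansion $(\nabla^j\ln\alpha^2)_n=-\sum_m\frac{c_m}{m}q^{mn}(1-q^m)^j$ is an attractive alternative to the paper's machinery, but as a proof of the conjecture the proposal contains no mechanism for any of the universal negative claims, which are exactly what the paper leaves open.
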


We have the following result concerning subshifts of the Agler shifts with geometrical spacing.

\begin{cor}   \label{cor:subshifts}
Let $p>1$ be a real number, let $N$ be an integer, let $j$ be an integer larger than $1$, and let $K$ be a positive integer such that $K > \max(|N|, |N+j-1|)$. \ Let $W_\alpha$ be the shift with weight sequence
$$\alpha_n = \sqrt{\frac{K p^n + N}{K p^n + N+j-1}}, \quad n = 0, 1,2, \ldots .$$
It is clear that $W_\alpha$ is a subshift of $A_j$, the $j$--th Agler shift. \ Further,
\begin{enumerate}
        \item If $N \le 1-j$ then $W_\alpha$ is $\mathcal{MID}$ with weights squared interpolated by a Bernstein function;
        \item If $1-j<N \le \dfrac{1-j}{2}$ then $W_\alpha$ is $\mathcal{MID}$ with weights squared interpolated by a log Bernstein function;
        \item If $\dfrac{1-j}{2}<N \le 0$ then $W_\alpha$ is subnormal;
        \item If $N = \dfrac{j-1}{p^k-1}$ for some integer $k =1, 2, \ldots$, then $W_\alpha$ is finitely atomic subnormal and not $\mathcal{MID}$.
    \end{enumerate}
\end{cor}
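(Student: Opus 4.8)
The plan is to recognize $W_\alpha$ as a geometrically regular weighted shift and then read off each conclusion from the appropriate part of the main theorem. First I would factor $K$ out of the radical,
$$\alpha_n = \sqrt{\frac{p^n + N/K}{p^n + (N+j-1)/K}}\,,$$
so that $W_\alpha$ is precisely the GRWS with parameters $p$ and $(\tilde N, \tilde D) := \bigl(N/K,\ (N+j-1)/K\bigr)$. The hypothesis $K > \max(|N|,\, |N+j-1|)$ is exactly what is needed to place $\tilde N, \tilde D \in (-1,1)$, so that the main theorem applies. Because $j > 1$, we have $\tilde D - \tilde N = (j-1)/K > 0$; thus $(\tilde N, \tilde D)$ always lies strictly above the main diagonal and can only belong to (the closure of) one of Sectors I--IV of Figure 1.

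Next I would convert each numerical hypothesis on $N$ into a statement about the position of $(\tilde N, \tilde D)$ relative to the sector boundaries $D=0$, $D=-N$, $N=0$ and the rays $D = p^k N$. The key equivalences are: $N \le 1-j$ iff $\tilde D \le 0$, placing the point in Sector I; $1-j < N \le \tfrac{1-j}{2}$ iff $0 < \tilde D$ and $\tilde N + \tilde D \le 0$, i.e. $0 < \tilde D \le -\tilde N$, placing it in Sector II; $\tfrac{1-j}{2} < N \le 0$ iff $\tilde N \le 0$ and $\tilde D > -\tilde N$, placing it in Sector III; and $N = \tfrac{j-1}{p^k-1}$ iff $N(p^k-1)=j-1$, i.e. $p^k \tilde N = \tilde D$, which puts $(\tilde N,\tilde D)$ on the ray $D = p^k N$ inside Sector IV (here $\tilde N > 0$). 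With the point located, conclusions (1)--(4) follow directly from parts (2), (3), (4) and (5) of the main theorem, respectively; for (4) one further observes that $\tilde D = p^k \tilde N > \tilde N$, so the exceptional diagonal case $N=D$ of the theorem's Sector-IV statement does not occur and the shift is indeed not $\mathcal{MID}$.

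Finally, for the subshift assertion I would exhibit the reindexing explicitly: writing $Kp^n + N = (Kp^n + N - 1) + 1$ and $Kp^n + N + j - 1 = (Kp^n + N - 1) + j$ shows that $\alpha_n$ equals the weight of $A_j$ at index $g(n) := Kp^n + N - 1$. When $p$ is a positive integer this $g$ maps $\mathbb{N}_0$ into $\mathbb{N}_0$ (using $K + N \ge 1$, which follows from $K > |N|$ together with integrality), is strictly increasing, and is genuinely exponential rather than affine, yielding the advertised geometric spacing. The argument is essentially bookkeeping layered on top of the main theorem, so no serious obstacle arises; the only points demanding care are the boundary conventions between adjacent closed sectors (notably the line $D=-N$ at $N=\tfrac{1-j}{2}$, where one should retain the stronger log-Bernstein/$\mathcal{MID}$ conclusion of Sector II) and the fact that realizing $W_\alpha$ as a subshift of $A_j$ needs $p$ to be an integer, even though the operator-theoretic conclusions (1)--(4) hold for every real $p>1$.
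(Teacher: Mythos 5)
Your proposal is correct and takes essentially the same route as the paper, whose entire proof is the one-line observation that the weights may be rewritten as $\alpha_n = \sqrt{\frac{p^n + N/K}{p^n + (N+j-1)/K}}$ so that the previously established sector results apply to the GRWS with parameters $p$ and $(N/K,(N+j-1)/K)$. The additional bookkeeping you supply---the translation of each hypothesis on $N$ into the sector inequalities for $(N/K,(N+j-1)/K)$, the explicit subshift map $g(n)=Kp^n+N-1$, and the caveat that the subshift claim implicitly requires $Kp^n+N$ to be a positive integer (e.g.\ $p$ an integer) even though conclusions (1)--(4) hold for all real $p>1$---is accurate detail that the paper leaves unstated behind its ``it is clear.''
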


As well, we show that Sector IV is divided into subsectors of $k$--hyponormality but not $(k+1)$--hyponormality by the rays $D = p^k N$.

\begin{thm}   \label{kHNthm}
Let $p > 1$ and $(N, D)$ such that $0 < N < D < 1$ (that is, the point lies in Sector {\rm IV}). \   Let $W_\alpha$ be a weighted shift with weight sequence $\alpha$ where
$$\alpha_n :=  \sqrt{\frac{p^n + N}{p^n + D}}, \quad n = 0, 1,2, \ldots .$$
In the interior of  each  subsector bounded by $D = p^k N$ and $D = p^{k-1} N$ (for $k =  1, 2, \ldots$) the shift $W_\alpha$ is $k$--hyponormal but not $(k+1)$--hyponormal.
\end{thm}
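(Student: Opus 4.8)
The plan is to use the moment-matrix characterization of $k$--hyponormality recorded above: $W_\alpha$ is $k$--hyponormal precisely when $M_\gamma(\ell,k):=(\gamma_{\ell+a+b})_{a,b=0}^{k}\ge 0$ for every $\ell\ge 0$, and since a real symmetric matrix is positive definite as soon as all its leading principal minors are positive, it suffices to control the signs of the Hankel determinants $\det M_\gamma(\ell,j)$ for $j \le k+1$. To compute these I would first put the moments in basic-hypergeometric form. With $q:=1/p\in(0,1)$ and the $q$-Pochhammer symbol $(x;q)_n:=\prod_{j=0}^{n-1}(1-xq^j)$, one has $\alpha_j^2=\frac{1+Nq^{j}}{1+Dq^{j}}$ and hence
\[
\gamma_n=\prod_{j=0}^{n-1}\frac{1+Nq^{j}}{1+Dq^{j}}=\frac{(-N;q)_n}{(-D;q)_n}.
\]
Writing $\gamma_n=\frac{(-N;q)_\infty}{(-D;q)_\infty}\frac{(-Dq^{n};q)_\infty}{(-Nq^{n};q)_\infty}$ and applying the $q$-binomial theorem $\frac{(cz;q)_\infty}{(z;q)_\infty}=\sum_{i\ge0}\frac{(c;q)_i}{(q;q)_i}z^{i}$ with $c=D/N$ and $z=-Nq^{n}$ produces a representing measure supported on the $q$-lattice $r_i:=p^{-i}$,
\[
\gamma_n=\sum_{i=0}^{\infty}w_i\,r_i^{\,n},\qquad w_i=\frac{(-N;q)_\infty}{(-D;q)_\infty}\,\frac{(D/N;q)_i}{(q;q)_i}\,(-N)^{i}.
\]
The whole argument turns on the sign of the masses $w_i$. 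In the subsector $p^{k-1}N<D<p^{k}N$ one has $k-1<\log_p(D/N)<k$, so exactly the factors $1-(D/N)q^{s}$ with $s=0,\dots,k-1$ are negative; tracking this against $(-N)^{i}$ shows $w_0,\dots,w_k>0$ while $w_{k+1}<0$ (and $w_i=0$ for $i>m$ exactly on the boundary ray $D=p^{m}N$, recovering the finitely atomic Berger measure there).

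Failure of $(k+1)$--hyponormality is the easy half. Expanding $M_\gamma(\ell,k+1)=V\,\mathrm{diag}(w_i r_i^{\ell})\,V^{\top}$ with the rectangular Vandermonde $V_{a,i}=r_i^{a}$, I would test the quadratic form on the coefficient vector $v$ of $Q(t):=\prod_{i=0}^{k}(t-r_i)$ (degree $k+1$), obtaining $\langle M_\gamma(\ell,k+1)v,v\rangle=\sum_{i\ge k+1}w_i r_i^{\ell}Q(r_i)^2$ because $Q$ kills the first $k+1$ atoms. Since $r_{k+1}=p^{-(k+1)}$ is the largest surviving node and $w_{k+1}<0$, this sum is dominated by its negative $i=k+1$ term once $\ell$ is large, so $M_\gamma(\ell,k+1)\not\ge 0$ for some $\ell$ and $W_\alpha$ is not $(k+1)$--hyponormal.

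For $k$--hyponormality I need $\det M_\gamma(\ell,j)>0$ for all $\ell\ge0$ and all $j\le k$, and for this the cleanest device is an exact product formula
\[
\det M_\gamma(\ell,j)=c_{\ell,j}(p,N,D)\,\prod_{m=0}^{j-1}\bigl(D-p^{m}N\bigr)^{\,j-m},\qquad c_{\ell,j}>0.
\]
Granting this, the subsector inequalities $D>p^{m}N$ for $m\le k-1$ make every factor positive when $j\le k$, so each such matrix is positive definite and $k$--hyponormality holds for all $\ell$; for $j=k+1$ the single factor $D-p^{k}N$ is negative to odd power and forces $\det M_\gamma(\ell,k+1)<0$, an alternative $\ell$-uniform proof of the failure above. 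To prove the product formula I would identify $\gamma_n$ as the normalized moments of the little $q$-Jacobi weight $w(x)=\frac{(D/N;q)_x}{(q;q)_x}(-N)^x$, observe that passing from $\mu$ to $t^{\ell}d\mu$ only shifts the parameter $a\mapsto aq^{\ell}$ within the same family, and use the LU/orthogonal-polynomial identity $\det M_\gamma(\ell,j)=\prod_{t=0}^{j}h_t(aq^{\ell},b)$ together with the explicit squared-norm formula for monic little $q$-Jacobi polynomials, reading off that $h_t$ contributes precisely the factor vanishing at $D=p^{t-1}N$. The location of the zeros is confirmed independently: on the ray $D=p^{m}N$ the Berger measure has exactly $m+1$ atoms, so $M_\gamma(\ell,j)$ has rank $m+1$ and $\det M_\gamma(\ell,j)$ vanishes there, with order equal to the nullity $j-m$; this fixes the exponents, and positivity of $c_{\ell,j}$ follows by comparing leading behavior (or, for a fully self-contained route, by a Dodgson condensation induction on $j$).

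The main obstacle is exactly this product formula: establishing the correct multiplicities $j-m$ and, above all, the positivity of the prefactor $c_{\ell,j}$ uniformly in $\ell$. The rank-drop argument cleanly explains where the determinant vanishes but not a priori the order of vanishing (one must check transversality, i.e.\ that the determinant is no more degenerate than the nullity predicts), and importing the little $q$-Jacobi norm requires care with sign conventions, since off the subnormal region the orthogonalizing functional is only a signed measure. The Dodgson condensation induction avoids the $q$-special-function machinery but trades it for verifying the resulting three-term determinant recurrence against the conjectured product, and that verification is where the real work lies.
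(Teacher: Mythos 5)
Your overall architecture coincides with the paper's at the decisive point: both reduce the theorem to an exact product formula for the Hankel determinants whose only sign-carrying, $(N,D)$-dependent factors are $(D-p^mN)$ to the power $j-m$, and both then read off $k$--hyponormality and its failure from nested principal minors. After reconciling matrix sizes (the paper works with the normalized $k\times k$ matrices $M(k,j)$) and folding the always-negative factors $(1-p^{i+1})^{k-i-1}$ into your prefactor $c_{\ell,j}$, your conjectured formula is exactly Lemma \ref{le:detMNj}. What is genuinely different --- and fully correct --- is your proof of the negative half. Your signed atomic representation $\gamma_n=\sum_i w_i p^{-in}$ via the $q$--binomial theorem is right (your masses agree with the $c_i$ of Proposition \ref{prop:measureSectorIII}, now continued past Sector III, and your closed form for the normalizing constant is a nice bonus), your sign count $w_0,\dots,w_k>0$, $w_{k+1}<0$ in the open subsector is correct, and testing $M_\gamma(\ell,k+1)$ on the coefficient vector of $\prod_{i=0}^{k}\bigl(t-p^{-i}\bigr)$ with $\ell\to\infty$ is a clean, self-contained disproof of $(k+1)$--hyponormality that is more elementary than the paper's route through the sign of a large determinant; it also recovers the boundary case of Theorem \ref{th:fintomicSIV}.

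The gap is the positive half. The product formula with $c_{\ell,j}>0$ uniformly in $\ell$ is precisely the hard content of the paper --- Lemma \ref{le:detMNj}, proved there by a laborious double induction on the condensation identity \eqref{eq:basicKratt} --- and you conjecture it rather than prove it. Your rank-drop argument on the rays $D=p^mN$ gives only a lower bound on the order of vanishing (the nullity $j-m$, since each of the $j-m$ eigenvalues vanishing there, chosen analytically in $D$, vanishes to order at least one); without a degree bound on the numerator of the rational function $\det M_\gamma(\ell,j)$ in $D$ this does not pin down the exponents, as you yourself concede. The little $q$--Jacobi route is genuinely promising: matching $bq=D/N$ and $aq=-N$ makes each squared norm $h_t$ carry exactly the factor $(D/N;q)_t\,(-N)^t\propto\prod_{s=0}^{t-1}(D-p^sN)$, so the multiplicities $j-m$ come out correctly, and the passage from $d\mu$ to $t^\ell\,d\mu$ is indeed the shift $a\mapsto aq^\ell$, which preserves the sign pattern of the residual factors. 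But in this parameter range the orthogonalizing functional is only a signed one, so the norm formula has to be transported by rational continuation in $(a,b)$ from the classical regime, and the positivity of the $\ell$-dependent residual factor still has to be verified term by term --- none of which is carried out. As written, the decisive lemma is assumed, and your stated fallback (Dodgson condensation with verification of the three-term determinant recurrence against the conjectured product) is not an alternative at all: it is the paper's own proof, left unexecuted.
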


\medskip

Finally, we show that the GRWS from the special lines in Sector IV provide a collection of examples of subnormal shifts whose affine subshifts need not be subnormal. Motivated by the fact that affine subshifts of the GRWS in Sectors I and II have their subshifts $\mathcal{MID}$  we prove in general that $\mathcal{MID}$ shifts have their affine subshifts $\mathcal{MID}$:

\begin{thm}\label{th:linearsubshiftofMID}
Let $W_\alpha$ be an $\mathcal{MID}$ shift. Then any affine subshift $W_{\alpha\circ g}$ is $\mathcal{MID}$.
\end{thm}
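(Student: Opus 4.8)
The plan is to reduce the theorem to a statement about complete monotonicity of an affinely subsampled sequence, which is then settled by the Hausdorff moment theorem. First I would observe that $\mathcal{MID}$ is preserved under multiplication by a positive scalar, in both directions: the Schur $s$-th power of $W_{c\alpha}$ has weights $c^s\alpha_n^s$, a scalar multiple of the (subnormal) Schur $s$-th power of $W_\alpha$, hence subnormal. Choosing $c$ with $c\,\|W_\alpha\| \le 1$ makes $W_{c\alpha}$ a contractive $\mathcal{MID}$ shift whose affine subshift is $W_{c\beta}$, where $\beta = \alpha\circ g$; proving $W_{c\beta}$ is $\mathcal{MID}$ then yields that $W_\beta$ is $\mathcal{MID}$ after undoing the scaling. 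Hence I may assume $0 < \alpha_n \le 1$ and invoke the characterization recalled in the introduction: a contractive shift is $\mathcal{MID}$ if and only if its moment sequence is log completely monotone, i.e. $\ln\gamma$ is completely monotone.

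Next I would translate the subshift into the language of moments. Writing $\gamma$ for the moments of $W_\alpha$ and $\delta$ for those of $W_\beta$ with $\beta_n = \alpha_{\ell n + r}$, and using $\alpha_m^2 = \gamma_{m+1}/\gamma_m$, a short computation gives the key identity
\[
(\nabla \ln\delta)_n = (\nabla\ln\gamma)_{\ell n + r}, \qquad n = 0,1,2,\ldots;
\]
that is, the first difference of the log-moments of the subshift is exactly the affine subsampling (step $\ell$, offset $r$) of the first difference of the log-moments of the original shift. Setting $b := \nabla\ln\gamma$, the hypothesis that $\ln\gamma$ is completely monotone gives $\nabla^k b = \nabla^{k+1}\ln\gamma \ge 0$ for all $k \ge 0$, together with $b = \nabla\ln\gamma \ge 0$, so $b$ is a nonnegative completely monotone sequence.

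The heart of the argument is then a lemma: the affine subsampling $n \mapsto b_{\ell n + r}$ of a nonnegative completely monotone sequence is again nonnegative and completely monotone. I would prove this with Hausdorff's theorem, which represents $b$ as $b_m = \int_0^1 t^m\,d\nu(t)$ for a finite positive measure $\nu$ on $[0,1]$; then
\[
b_{\ell n + r} = \int_0^1 (t^\ell)^n\, t^r\, d\nu(t) = \int_0^1 u^n\, d\tilde\nu(u),
\]
where $\tilde\nu$ is the pushforward of the positive measure $t^r\,d\nu(t)$ under $t \mapsto t^\ell$. Hence $(b_{\ell n + r})_n$ is itself a Hausdorff moment sequence, so it is completely monotone. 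Applying this to $b = \nabla\ln\gamma$ shows $\nabla\ln\delta$ is completely monotone, whence $\nabla^k\ln\delta = \nabla^{k-1}(\nabla\ln\delta) \ge 0$ for all $k \ge 1$; that is, $\ln\delta$ is completely monotone. Thus $W_\beta$ is contractive $\mathcal{MID}$, and undoing the scaling gives the theorem.

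I expect the main obstacle to be the clean bookkeeping that funnels everything through the single identity $(\nabla\ln\delta)_n = (\nabla\ln\gamma)_{\ell n+r}$: once the affine subshift is seen to act as affine subsampling on the first differences of the log-moments, the Hausdorff representation finishes with essentially no loss. The points requiring care are the sign and indexing conventions for $\nabla$ (so that the relaxed, possibly-negative definition of log complete monotonicity is respected, and only the differences of order $\ge 1$ are tested), and confirming that both the offset factor $t^r$ and the step map $t\mapsto t^\ell$ preserve positivity and the support $[0,1]$ of the representing measure.
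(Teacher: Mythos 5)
Your proposal is correct, but it proves the theorem by a genuinely different route than the paper. The paper works on the weight side: it uses the characterization of $\mathcal{MID}$ by log complete alternation of the weight sequence, and proves an operator-level lemma for $M_T(X)=T^*XT$ based on the factorization $(I-M_{T^k})^n=\bigl(\sum_{j=0}^{k-1}M_{T^j}\bigr)^n(I-M_T)^n$, which exhibits every $n$-th difference of the step-$k$ subsampled sequence as a \emph{positive} linear combination of $n$-th differences of the original sequence at shifted indices; applying this (after the same scaling you perform) to $(\ln\alpha_m)$ gives complete alternation of $(\ln\alpha_{\ell n+r})$ directly, one order at a time. You instead work on the moment side (log complete monotonicity of $\gamma$), funnel everything through the first-difference identity $(\nabla\ln\delta)_n=(\nabla\ln\gamma)_{\ell n+r}$ --- which checks out, since under the paper's convention $(\nabla a)_j=a_j-a_{j+1}$ both sides equal $-2\ln\alpha_{\ell n+r}$ --- and then settle the subsampling question by Hausdorff's theorem: with $b=\nabla\ln\gamma$ nonnegative (this is where your contractivity reduction is genuinely needed) and completely monotone, $b_{\ell n+r}$ is the moment sequence of the pushforward of $t^r\,d\nu$ under $t\mapsto t^\ell$, hence completely monotone, and $\nabla^k\ln\delta=\nabla^{k-1}(\nabla\ln\delta)\ge 0$ for $k\ge 1$ respects the paper's relaxed (order $\ge 1$ only) definition. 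Since the two characterizations (weights log completely alternating versus moments log completely monotone) are stated in the paper as equivalent, the difference is the engine: the paper's positive-coefficient expansion is purely algebraic, needs no representation theorem, and its lemma is formulated for arbitrary operators (so potentially reusable beyond shifts), whereas your Hausdorff argument is shorter and buys an explicit representing measure for the subsampled difference sequence, making the preservation of complete monotonicity transparent. Both handle the scaling and sign bookkeeping correctly; your version also covers $\ell=1$ uniformly, where the paper's computation assumes $k\ge 2$ (the remaining case being trivial).
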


\medskip

\section{Proofs and Discussion}  \label{se:2}

We begin with noting the appearance of isolated examples of GRWS operators, usually in connection with subnormality or its relatives, and sometimes in conjunction with the notion of various sorts of subshifts of familiar shifts.  It is easy to show that affine subshifts of the Agler shifts are subnormal (and in fact $\mathcal{MID}$) because they fall within the class of homographic shifts.  One can easily construct, starting with a finitely atomic measure, a subnormal shift some of whose affine subshifts are not subnormal.  We note in the final subsection that the analysis of the GRWS provides other, and less \textit{ad hoc}, examples of subnormal shifts whose subshifts need not be subnormal.  We also show there that affine subshifts of $\mathcal{MID}$ are in fact $\mathcal{MID}$.  However, there is no particular reason to believe that other -- non-affine -- subshifts of even $\mathcal{MID}$ shifts should retain subnormality or any other desirable property stronger than hyponormality.

In \cite{CE} it was noted that the shift with weight sequence
$$\alpha_n = \sqrt{\frac{2^{n+2}-2}{2^{n+2}-1}}$$
has a subnormal Schur square root via generating functions, and the argument actually shows the shift is $\mathcal{MID}$. \  The weight sequence begins
$$\sqrt{\frac{2}{3}}, \sqrt{\frac{6}{7}}, \sqrt{\frac{14}{15}}, \sqrt{\frac{30}{31}}, \ldots $$
and this is clearly a subshift, although not an affine subshift, of the Bergman shift.

Further, in \cite{CPY} (and used later in \cite{Ex2} as an example of a subnormal shift whose Aluthge transform is not subnormal), there is the shift with weight sequence
$$
\alpha_n = \sqrt{\frac{2^{n+2}+1}{2^{n+2}+2}} \quad (n =0,1,\ldots).
$$
This shift is subnormal but not $\mathcal{MID}$ (its Berger measure is three-atomic), and is again clearly a subshift of the Bergman shift. \  This pair of apparently rather similar examples, but with quite different properties,  suggests something worthy of exploration. \  Per G. Polya's observation in \cite{Po}, it turns out to be sensible to consider a more general problem, in which we temporarily abandon conditions producing a subshift.

A further motivation for consideration of this class of shifts is that in the studies yielding \cite{CE}, \cite{BCE1}, \cite{BCE2}, and \cite{BCE3} we became all too aware that the families of shifts often studied (those with finitely atomic Berger measures -- in particular, the two-atomic shifts used by Stampfli in \cite{St} -- and the Agler shifts $A_j$ with weight sequences $\alpha_n = \sqrt{\frac{n+1}{n+j}}$\,) are sufficiently special to be seriously misleading. \  One often embarks upon the study of a new operator property by considering weighted shifts, and it is therefore prudent to enrich the class of tractable examples when possible.

\medskip
\subsection{Sectors {\rm I} and {\rm II}}

The results will follow from the point of view of function interpolation, and we begin with a definition and a lemma.

\begin{definition}  Given a family $G \equiv \{g_i\}_{i=1}^{\ell}$ of infinitely differentiable positive functions defined on $\mathbb{R}_+$, we say that $G$ is a \textbf{completely monotone family} if for each $k=1,2,\ldots$, the derivative $\frac{d}{d x} g_k(x)$ is a sum of terms of the form
\begin{equation}     \label{eq:form}
(-1) c_k f_k(x) \prod_{i=1}^{\ell} g_i(x)^{n_i}
\end{equation}
where the $n_i$ are nonnegative integers, $c_k$ is a positive constant, and $f_k$ is a completely monotone function ($1 \le k \le \ell$). \ (We interpret any $g_i(x)^0$ as the function identically $1$.)
\end{definition}

The name is justified by the following lemma.

\begin{lem}
Let $G \equiv \{g_i\}_{i=1}^{\ell}$ be a completely monotone family. \  Then each member of $G$ is completely monotone.
\end{lem}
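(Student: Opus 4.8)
The plan is to prove the claim by strong induction on $k$, showing simultaneously that for each $k$ the function $g_k$ is completely monotone, using the structural hypothesis that $\frac{d}{dx} g_k$ is a negative sum of terms built out of completely monotone functions $f_k$ and products of the (already assumed positive) family members $g_i$. The key observation I would exploit is that the class of completely monotone functions on $\mathbb{R}_+$ is closed under three operations: nonnegative linear combinations, products, and the crucial fact that a positive function whose derivative is the negative of a completely monotone function is itself completely monotone. This last fact is essentially the definition unwound—if $h > 0$ and $-h'$ is completely monotone, then $h' \le 0$, $h'' = -(-h')' \ge 0$, and in general the signs of the derivatives of $h$ alternate correctly because they are (up to sign) the derivatives of the completely monotone function $-h'$, which alternate by definition.

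First I would record the two stability facts I need: (a) if $u$ and $v$ are completely monotone then so is the product $uv$ (a standard result; the alternation of signs of derivatives follows from the Leibniz rule since each term $\binom{m}{i} u^{(i)} v^{(m-i)}$ carries the correct sign $(-1)^m$), and (b) a nonnegative constant times a completely monotone function, and a finite sum of completely monotone functions, are completely monotone. Combining (a) iteratively, each product $\prod_{i=1}^{\ell} g_i(x)^{n_i}$ is completely monotone \emph{provided} each $g_i$ appearing with positive exponent has already been shown completely monotone; and then multiplying by the completely monotone $f_k$ and the positive constant $c_k$ keeps the term completely monotone. Here I must be careful about the order of the induction: the expression for $g_k'$ may involve $g_i$ for indices $i$ other than $k$, so a naive induction on the single index $k$ is not obviously valid.

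The main obstacle is exactly this interdependence: the derivative of $g_k$ is allowed to contain factors $g_i^{n_i}$ for \emph{all} $i$, including $i > k$, so I cannot simply induct on $k$ with the expectation that the functions appearing on the right are already known to be completely monotone. To handle this I would instead induct on the order $m$ of the derivative, proving the joint statement $P(m)$: ``for every $k$, $(-1)^m g_k^{(m)} \ge 0$.'' The base cases $m = 0$ (positivity, given) and $m=1$ (each $g_k' \le 0$, since $g_k'$ is a negative sum of products of positive functions) are immediate. For the inductive step, assuming $P(0), \ldots, P(m)$ for all members simultaneously, I differentiate the given expression for $g_k'$ an additional $m-1$ times and apply the general Leibniz rule to each term $(-1) c_k f_k \prod_i g_i^{n_i}$; every resulting summand is a product of derivatives of completely monotone or (by induction) sign-controlled functions, and a sign count shows the whole $m$-th derivative of $g_k'$, i.e. $g_k^{(m+1)}$, has sign $(-1)^{m+1}$. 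This simultaneous induction over all $k$ at each derivative order is what defeats the circularity.

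Once $P(m)$ holds for all $m$ and all $k$, each $g_k$ has derivatives alternating in sign in the pattern required by the definition of completely monotone, which is precisely the conclusion. I would close by remarking that the positivity hypothesis on the $g_i$ is used only to guarantee that the products $\prod_i g_i^{n_i}$ are themselves positive (hence legitimate factors for the product-closure argument) and to anchor the base case $P(0)$; the real engine is the interaction between the negative sign in \eqref{eq:form}, the completely monotone factor $f_k$, and the Leibniz-rule sign bookkeeping.
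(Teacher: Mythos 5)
Your proof is correct, but it resolves the central difficulty by a genuinely different mechanism than the paper. The paper fixes one member ($g_1$, without loss of generality) and inducts on the derivative order with a \emph{structural} invariant: every term of $g_1^{(n)}$ retains the canonical form $(-1)^n c\, f(x) \prod_i g_i(x)^{n_i}$ with $f$ completely monotone. The circularity you worry about (that $g_k'$ involves $g_i$ for other indices $i$) never arises there, because each time the product rule differentiates a factor $g_j$, the paper \emph{substitutes the defining identity} \eqref{eq:form} for $g_j'$, reabsorbing the result into canonical form; the only analytic inputs are product-closure of completely monotone functions (for $f_1^{(n)} f_j$) and the fact that the derivative of a completely monotone function is the negative of one. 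You instead keep only the \emph{sign} invariant $P(m)$: $(-1)^m g_k^{(m)} \ge 0$, but carry it for all members simultaneously, and push the induction through with the Leibniz rule and a parity count on the total derivative order; no substitution of \eqref{eq:form} beyond the first derivative is needed, and you need essentially no closure lemma, just bookkeeping. Your route is arguably more elementary and makes the defeat of the interdependence explicit, whereas the paper's buys a stronger conclusion---an explicit canonical form for every derivative of each $g_k$, a technique it reuses in the proof of Lemma \ref{le:afnisCM}. Two cosmetic points: your opening paragraph's fact about a positive function whose derivative is the negative of a completely monotone function is never actually used in your final argument (it could not be, since $-g_k'$ being completely monotone is in effect what is being proved), and in the inductive step you say you differentiate $g_k'$ ``an additional $m-1$ times'' when you mean $m$ times (to reach $g_k^{(m+1)}$ as the $m$-th derivative of $g_k'$, exactly as your sign count assumes); neither affects correctness.
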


\begin{proof} \  Without loss of generality, we show, by induction, that $g_1$ is completely monotone. \  In fact, we show that the $n$--th derivative of $g_1$ is a sum of terms of the form
\begin{equation}  \label{eq:derivform}
(-1)^n c_1^{(n)} f_1^{(n)}(x) \prod_{i=1}^{\ell} g_i(x)^{n_i(n)}
\end{equation}
where $c_1^{(n)}$ is a positive constant, $f_n$ is a completely monotone function, and the $n_i(n)$ are nonnegative integers. \  The case $n=1$ is precisely what is required by the definition in \eqref{eq:form}. \  Suppose then that the result holds for $n$. \  Each term in the $(n+1)$--st derivative of $g_1$ arises from the product rule applied to some term as in \eqref{eq:derivform}, and there are two possibilities. \  If the derivative of some $g_j(x)^{n_j(n)}$ (where we ignore the trivial case $n_j(n)=0$) is being taken as part of the product rule, what results is
\begin{eqnarray*}
&&(-1)^n c_1^{(n)} f_1^{(n)}(x) \prod_{i=1,i \neq j }^{\ell} g_i(x)^{n_i(n)} \cdot ({n_j(n)}-1)g_j(x)^{n_j(n)-1} g_j'(x) \\ &=& (-1)^n c_1^{(n)} f_1^{(n)}(x) \prod_{i=1,i \neq j }^{\ell} g_i(x)^{n_i(n)} \cdot  ({n_j(n)}-1)g_j(x)^{n_j(n)-1} \cdot (-1) c_j f_j(x) \prod_{i=1}^{\ell} g_i(x)^{n_j} \\
&=&(-1)^{n+1}  c_1^{(n)} c_j ({n_j(n)}-1) f_1^{(n)}(x) f_j(x) \prod_{i=1,i \neq j }^{\ell} g_i(x)^{n_i(n)} \cdot  \prod_{i=1}^
{\ell} g_i(x)^{n_j} \\
\end{eqnarray*}
and it is easy to rewrite this to see that it is of the form required for \eqref{eq:derivform}. \  (Of course, we use that the product of the completely monotone functions $f_1^{(n)}$ and $f_j$ is completely monotone.)

In the case in which the derivative of $f_1^{(n)}$ is being taken as part of the product rule, we use that $\frac{d}{d x} f_1^{(n)}$ is the negative of a completely monotone function since $f_1^{(n)}$ is completely monotone. \  We note that in each case the sign of the $(n+1)$--st derivative is as required, and the form is as claimed; this completes the induction.
\end{proof}

We have the following result.

\begin{thm}\label{th:1ovpxmaisCM}
Let $ 0<a<1$ and $p>1$, and let $f:\mathbb{R}_+ \rightarrow \mathbb{R}_+$ be the function defined by $f(x):
=\frac{1}{p^x-a}$. \ Then $f$ is completely monotone.
\end{thm}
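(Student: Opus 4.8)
The plan is to invoke the completely monotone family lemma just established, rather than estimating the signs of the derivatives $f^{(n)}$ directly. Writing $b := \ln p > 0$, note that for $x \in \mathbb{R}_+$ we have $p^x \ge 1 > a$, so $p^x - a \ge 1 - a > 0$; hence $f(x) = \frac{1}{p^x - a}$ is a well-defined, positive, infinitely differentiable function on $\mathbb{R}_+$. The immediate difficulty is that $f$ alone does not form a completely monotone family: differentiating gives $f'(x) = -\frac{p^x \ln p}{(p^x-a)^2}$, and the factor $p^x$ is \emph{increasing}, hence not completely monotone, so this derivative is not of the required form \eqref{eq:form} in terms of $f$ by itself. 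The key idea is to enlarge the collection by one auxiliary function chosen precisely to absorb the troublesome factor $p^x$.

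First I would set $g_1 := f = \frac{1}{p^x - a}$ and introduce the second function $g_2(x) := \frac{p^x}{p^x - a}$, which is again positive and smooth on $\mathbb{R}_+$. The point is the identity $\frac{p^x}{(p^x-a)^2} = g_1(x)\, g_2(x)$, which lets both derivatives close up inside the family. A direct computation yields
\[
g_1'(x) = (-1)\,(\ln p)\, g_1(x)\, g_2(x), \qquad g_2'(x) = (-1)\,(a \ln p)\, g_1(x)\, g_2(x).
\]
In the notation of the definition, each of these is a single term $(-1) c_k f_k(x) \prod_i g_i(x)^{n_i}$ with the completely monotone factor taken to be the constant function $f_k \equiv 1$, the positive constants $c_1 = \ln p$ and $c_2 = a \ln p$, and exponents $n_1 = n_2 = 1$. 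Thus $G = \{g_1, g_2\}$ is a completely monotone family.

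With the family in hand, the Lemma immediately gives that $g_1 = f$ is completely monotone, which is exactly the assertion. The only genuinely non-routine step is the guess of the closing function $g_2$; everything after that is the two elementary derivative computations above and the observation that the constant $1$ is completely monotone. I would note in passing that one can also prove the result directly by the geometric expansion $f(x) = \sum_{k \ge 0} a^k\, p^{-(k+1)x}$, valid since $a/p^x < 1$ on $\mathbb{R}_+$, which exhibits $f$ as a convergent nonnegative combination of the completely monotone functions $e^{-(k+1)(\ln p)x}$; however, the family argument is preferred here as it illustrates the lemma and requires no separate justification of term-by-term differentiation.
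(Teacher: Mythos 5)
Your proof is correct and is essentially the paper's own argument: the paper proves the theorem by asserting that $\left\{\frac{1}{p^x-a},\ \frac{p^x}{p^x-a}\right\}$ is a completely monotone family, which is exactly the family you construct, and your derivative computations supply the "straightforward computation" the paper leaves to the reader. The geometric-series aside you mention is a valid alternative but is not needed; the main argument matches the paper precisely.
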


\begin{proof} \ It is a straightforward computation to show that the family $\{\frac{1}{p^x - a}, \frac{p^x}{p^x - a}\}$ is a completely monotone family.
\end{proof}

\medskip

After a computational lemma, we may deduce another result about what will be a function interpolating a weights squared sequence.

\begin{lem}  \label{le:afnisCM}
Let $p > 1$, suppose $c$ satisfies $0 < c <1$, and consider the function $f$ defined by
$$f(x) = \frac{p^x \ln p}{p^x -c}, \quad x \in \mathbb{R}_+.$$
Then  the derivatives of $f$ have the form
\begin{equation}  \label{eq:formfn}
f^{(n)}(x) = \frac{(\ln p)^{n+1}\sum_{i=1}^n d^{(n)}_i c^{n+1-i} (p^x)^i}{(p^x-c)^{n+1}},
\end{equation}
where we do not specify the coefficients $d^{(n)}_i$ further except to note that they are independent of $c$ and all of the same sign (all negative if $n$ is odd and all positive if $n$ is even). \  It results in particular that $f$ is completely monotone.
\end{lem}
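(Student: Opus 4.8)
The plan is to establish the formula \eqref{eq:formfn} by induction on $n$, tracking both the shape of the rational expression and the signs of the coefficients $d^{(n)}_i$ simultaneously. For the base case $n=1$, I would simply differentiate $f(x) = \frac{p^x \ln p}{p^x - c}$ directly: the quotient rule gives
$$f'(x) = \frac{-(\ln p)^2\, c\, p^x}{(p^x-c)^2},$$
which is of the asserted form with the single coefficient $d^{(1)}_1 = -1$ (negative, as required since $n=1$ is odd) and manifestly independent of $c$.

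For the inductive step, I would record the inductive hypothesis as $f^{(n)}(x) = (\ln p)^{n+1}\, N_n(x)/(p^x-c)^{n+1}$, where $N_n(x) = \sum_{i=1}^n d^{(n)}_i c^{n+1-i}(p^x)^i$ is a polynomial in $p^x$. Using that $\frac{d}{dx}(p^x)^i = i\,(\ln p)(p^x)^i$, differentiation by the quotient rule produces exactly one extra factor of $\ln p$ and, after cancelling a common factor $(p^x-c)^n$, shows that the numerator of $f^{(n+1)}$ equals
$$N_{n+1}(x) = M_n(x)\,(p^x - c) - (n+1)\,N_n(x)\,p^x, \qquad M_n(x) := \sum_{i=1}^n i\, d^{(n)}_i c^{n+1-i}(p^x)^i.$$
Expanding and reindexing the $(p^x)^{i+1}$ terms via $j = i+1$ then yields a recursion for the new coefficients that involves no $c$, namely $d^{(n+1)}_1 = -d^{(n)}_1$, $d^{(n+1)}_{n+1} = -d^{(n)}_n$, and $d^{(n+1)}_j = (j-n-2)\,d^{(n)}_{j-1} - j\, d^{(n)}_j$ for $2 \le j \le n$; the same bookkeeping confirms the claimed power $c^{(n+1)+1-j}$ in each term and that the denominator is $(p^x-c)^{n+2}$.

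The key step, and the only one requiring real care, is the sign analysis. Under the inductive hypothesis that all $d^{(n)}_i$ share a common sign $\sigma_n$ (positive for $n$ even, negative for $n$ odd), I would observe that in the recursion every multiplier is strictly negative over the relevant index range: $-1 < 0$, $-j < 0$, and $(j-n-2) \le -2 < 0$ whenever $2 \le j \le n$. Hence each $d^{(n+1)}_j$ is a sum of terms all of sign $-\sigma_n$, so there is no cancellation and all $d^{(n+1)}_j$ share the sign $\sigma_{n+1} = -\sigma_n$, which completes the induction. Finally, for complete monotonicity I would note that on $\mathbb{R}_+$ one has $p^x \ge 1 > c$, so $p^x - c > 0$ and every factor $(\ln p)^{n+1}$, $c^{n+1-i}$, $(p^x)^i$ in \eqref{eq:formfn} is positive; thus $f^{(n)}(x)$ has sign $\sigma_n$, i.e. $(-1)^n f^{(n)}(x) \ge 0$ for all $n$, together with $f \ge 0$. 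This is precisely the alternating-sign condition defining a completely monotone function, so $f$ is completely monotone.
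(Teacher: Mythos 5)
Your proposal is correct and takes essentially the same route as the paper: induction starting from $f'(x) = -c\,p^x(\ln p)^2/(p^x-c)^2$, a quotient-rule computation yielding a recursion for the coefficients $d^{(n)}_i$, and the observation that every multiplier in that recursion is negative, so all coefficients share a sign that flips with each differentiation and no cancellation occurs. Indeed your middle-case recursion $d^{(n+1)}_j = (j-n-2)\,d^{(n)}_{j-1} - j\,d^{(n)}_j$ is the correct one (one can check it against $d^{(3)}_2=-4$), whereas the paper's printed recursion contains minor typos (the factor $(n+1-i)$ should be $(n+2-i)$, and the displayed $c$-powers belong to the full numerator coefficients rather than to the $c$-independent $d^{(n)}_i$); these slips do not affect the paper's sign argument, which matches yours.
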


\begin{proof} \ The proof will be by induction, and one first computes that
$$f'(x) = \frac{-c p^x (\ln p)^2}{(p^x-c)^2}$$
as required for $n=1$. \  Suppose that, for some $n$, $f^{(n)}(x)$ has the form in \eqref{eq:formfn}. \  It is a computation using the quotient rule to show that $f^{(n+1)}(x)$ has the required form, with
$$
d^{n+1}_i = \left\{
\begin{array}{cc}
- d^{(n)}_1 c^{n+1}, & i=1, \\
& \\
\left(-i d^{(n)}_i -(n+1-i)d^{(n)}_{i-1}\right) c^{(n+2- i)}, & 1 \leq i \leq n, \\
& \\
- d^{(n)}_n c , & i = n+1.
\end{array}\right.
$$
From the signs shown above, the result follows.
\end{proof}

\begin{thm}\label{th:pxpaovpxpblogB}
Let $-1 < a < 0$, $0 < b < 1$, $b \leq -a$, and $p >1$, and let $f$ be the function defined by $f(x):=\frac{p^x+a}{p^x+b}$. \ Then $f$ is a log Bernstein function.
\end{thm}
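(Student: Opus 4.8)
The plan is to reduce everything to the generalized Bernstein criterion recalled in the introduction: since a generalized Bernstein function is exactly one whose first derivative is completely monotone, and ``log Bernstein'' permits $\ln f$ to be negative, it suffices to prove that $(\ln f)'$ is completely monotone on $\mathbb{R}_+$. First I would record that $f$ is well defined and positive: because $-1 < a$ and $0 < b$, we have $p^x + a \ge 1 + a > 0$ and $p^x + b > 0$ for every $x \ge 0$, so $\ln f = \ln(p^x + a) - \ln(p^x + b)$ makes sense (and is negative, which is why the relaxed notion is needed). Differentiating gives
\[
(\ln f)'(x) = \frac{p^x \ln p}{p^x + a} - \frac{p^x \ln p}{p^x + b} = p^x \ln p\left(\frac{1}{p^x+a} - \frac{1}{p^x+b}\right).
\]

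The central step is to expand this as a series of decaying exponentials. Writing $q := \ln p > 0$ and $t := p^{-x} = e^{-qx} \in (0,1]$ for $x \ge 0$, and using $|a| < 1$, $b < 1$ so that $|at| \le |a| < 1$ and $|bt| \le b < 1$, I would expand each reciprocal as a convergent geometric series,
\[
\frac{1}{p^x + c} = \frac{t}{1 + ct} = \sum_{k=0}^{\infty} (-c)^k\, e^{-(k+1)qx} \qquad (c \in \{a,b\}).
\]
Multiplying by $p^x \ln p = q\,e^{qx}$ shifts the exponents; subtracting the two series, the constant term cancels and one is left with
\[
(\ln f)'(x) = q \sum_{m=1}^{\infty}\bigl[(-a)^m - (-b)^m\bigr] e^{-m q x}.
\]

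Everything then reduces to a sign analysis of the coefficients $\lambda_m := q[(-a)^m - (-b)^m]$, and this is exactly where the hypotheses enter. Put $A := -a \in (0,1)$. For $m$ odd, $(-b)^m = -b^m$, so $\lambda_m = q(A^m + b^m) > 0$ automatically. For $m$ even, $(-b)^m = b^m$, so $\lambda_m = q(A^m - b^m)$, which is nonnegative if and only if $A \ge b$, that is, precisely the assumed inequality $b \le -a$. Hence all $\lambda_m \ge 0$, and since each $e^{-mqx}$ is completely monotone, $(\ln f)'$ is a nonnegative combination of completely monotone functions, hence completely monotone; this shows $f$ is log Bernstein.

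The step I expect to be the genuine obstacle is recognizing that one must \emph{not} argue term by term: the subtracted summand $\frac{1}{p^x+b}$ is itself not completely monotone once $b$ is not small, so neither Theorem \ref{th:1ovpxmaisCM} nor Lemma \ref{le:afnisCM} applies to it directly, and complete monotonicity must be read off from the combined coefficients $\lambda_m$ — it is exactly there that $b \le -a$ does its work, rescuing the even-indexed terms (the odd ones being positive for free). The only remaining technical point, which is routine, is term-by-term differentiation: since $\lambda_m$ is dominated by $q\max(A,b)^m$ with $\max(A,b) < 1$, for every $n$ the series $\sum_m \lambda_m(-mq)^n e^{-mqx}$ converges uniformly on $[0,\infty)$, whence $(-1)^n(\ln f)^{(n+1)}(x) = \sum_m \lambda_m(mq)^n e^{-mqx} \ge 0$ for all $n \ge 0$, completing the verification that $(\ln f)'$ is completely monotone.
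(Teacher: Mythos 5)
Your proof is correct, but it takes a genuinely different route from the paper's. The paper never expands anything in series: it first establishes (Lemma \ref{le:afnisCM}) an inductive closed form for the $n$-th derivative of $f_c(x)=\frac{p^x\ln p}{p^x-c}$, namely $\frac{(\ln p)^{n+1}\sum_{i=1}^n d^{(n)}_i c^{n+1-i}(p^x)^i}{(p^x-c)^{n+1}}$ with coefficients $d^{(n)}_i$ independent of $c$ and all of one sign, and then proves the theorem by writing $(\ln f)' = f_a - f_b$ over a common denominator and showing that the coefficient of each $(p^x)^i$, which equals $(p^x+b)^{n+1}d^{(n)}_i(-a)^{n+1-i} - (p^x+a)^{n+1}d^{(n)}_i(-b)^{n+1-i}$, inherits the sign of the $f_a$-term because $p^x+b > p^x+a$ and $-a \ge b$ force domination. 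You instead exhibit $(\ln f)'$ explicitly as $q\sum_{m\ge 1}\lambda_m e^{-mqx}$ with $\lambda_m = q\left[(-a)^m-(-b)^m\right]$, i.e.\ as the Laplace transform of the discrete measure $\sum_m \lambda_m \delta_{mq}$, and reduce everything to nonnegativity of the $\lambda_m$. Both proofs correctly locate where $b\le -a$ does its work (in the paper, the domination inequality; for you, the even-indexed $\lambda_m$), and your diagnosis is accurate that one cannot argue term by term since $f_b$ alone is not completely monotone. What each buys: your Bernstein--Widder-style representation is more elementary and self-contained (no inductive derivative formula needed, only the routine Weierstrass-test justification of termwise differentiation, which you supply), and it yields a bonus the paper's argument does not: by uniqueness of the representing measure, if $b > -a$ then $\lambda_2 < 0$ and $(\ln f)'$ is \emph{not} completely monotone, so your method shows the hypothesis $b\le -a$ is sharp for this conclusion. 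The paper's approach, by contrast, stays within the completely-monotone-family calculus it develops for Theorem \ref{th:1ovpxmaisCM} and Lemma \ref{le:afnisCM}, so it reuses machinery that serves the rest of Sections I and II and sidesteps any convergence discussion. One cosmetic quibble: your bound on the coefficients should read $\lambda_m \le 2q\max(-a,b)^m$ rather than $q\max(-a,b)^m$, which changes nothing in the convergence argument.
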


\begin{proof} \ It clearly suffices to show that
$$g(x) := \ln \left(\frac{p^{x} +a}{p^{x}+b}\right) = \ln (p^{x} +a) - \ln (p^{x}+b)$$
is a (generalized) Bernstein function. \ One computes that
$$
f(x) := f_a(x) - f_b(x) = g'(x) = \frac{p^x \ln p}{p^x +a} - \frac{p^x \ln p}{p^x +b};
$$
observe that $f$ is positive on $\mathbb{R}_+$ and that it suffices to show that it is completely monotone.

Citing the lemma, we have that the derivatives of $f_a$ have the form in \eqref{eq:formfn} with $c$ of that lemma set to $-a$ (appropriately positive) and that the coefficients in its derivatives -- recall that these are all of the same sign -- are appropriately positive or negative to assure that it is completely monotone. \  By an argument similar to that in the proof of the lemma, we may show that the derivatives of $f_b$ have the form in \eqref{eq:formfn} with the same coefficients except that $c$ of that lemma is replaced by $-b$. \  (Of course since $-b < 0$, some of these are, and some are not, of the correct sign to make $f_b$ itself completely monotone.)  Consider now what happens if we find the $n$--th derivative of $f$ by finding the common denominator for the difference of the  $n$--th derivatives of $f_a$ and $f_b$:  the coefficient of some $(p^x)^i$ in the numerator of this derivative will have the form
$$(p^x + b)^{n+1} d^{(n)}_i (-a)^{n+1-i} - (p^x + a)^{n+1} d^{(n)}_i (-b)^{n+1-i}.$$
We claim that this will have the same sign as that of $d^{(n)}_i (-a)^{n+1-i}$, and this holds irrespective of the power applied to $(-b)$ because (recalling the signs of $a$ and $b$) $(p^x + b) > (p^x + a)$ and $a \geq -b$, so the first term just above dominates the second. \  Thus the sign of $f^{(n)}$ will be the same as the sign of $f_a^{(n)}$; it follows that $f$ is completely monotone. \  This completes the proof.
\end{proof}

\medskip

As a consequence, we obtain the following

\begin{cor}\label{cor:SecIBSecIIlogB} 
With the same notation as above, we have

\begin{enumerate}
\item If $(N,D) \in (-1,0]\times (-1,0]$  and $D\geq N$ then $\alpha^2(N,D)$ is interpolated by a Bernstein function.
\item
If $(N,D) \in (-1,0]\times [0,1)$  and $D\leq -N$ then $\alpha^2(N,D)$ is interpolated by a log Bernstein function.
\end{enumerate}
\end{cor}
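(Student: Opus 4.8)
The plan is to exhibit, in each case, an explicit function interpolating the weights squared, namely
$$\phi(x) := \frac{p^x + N}{p^x + D},$$
which plainly satisfies $\phi(n) = \alpha_n^2$, and then to read off its (log) Bernstein character directly from the three results just established. Throughout I would use that for $x \in \mathbb{R}_+$ and $p > 1$ one has $p^x \ge 1$, so the denominator $p^x + D$ is positive whenever $D > -1$; this keeps $\phi$ positive and legitimizes the rewritings below.

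For part (1), where $N, D \in (-1,0]$ and $D \ge N$, I would first rewrite
$$\phi(x) = 1 - (D-N)\,\frac{1}{p^x + D}.$$
Setting $\psi(x) := (D-N)\frac{1}{p^x+D}$, the factor $\frac{1}{p^x+D} = \frac{1}{p^x - (-D)}$ is completely monotone: when $-D \in (0,1)$ this is exactly Theorem \ref{th:1ovpxmaisCM} with $a = -D$, while the boundary case $D = 0$ is immediate since $\frac{1}{p^x} = e^{-x \ln p}$ is completely monotone. Because $D - N \ge 0$, the function $\psi$ is a nonnegative completely monotone function, and a one-line estimate (using $N > -1$, hence $p^x > -N$ on $\mathbb{R}_+$) shows $0 \le \psi < 1$. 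The remark recorded earlier in the paper -- that $1 - f$ is Bernstein whenever $f$ is completely monotone with $0 \le f \le 1$ -- then applies with $f = \psi$ and $g = \phi = 1 - \psi \ge 0$, giving that $\phi$ is Bernstein, as claimed.

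For part (2), where $N \in (-1,0]$, $D \in [0,1)$ and $D \le -N$, the generic interior case $N \in (-1,0)$, $D \in (0,1)$ is precisely Theorem \ref{th:pxpaovpxpblogB} applied with $a = N$ and $b = D$: its hypotheses $-1 < a < 0$, $0 < b < 1$, $b \le -a$ translate verbatim, and the conclusion is that $\phi$ is log Bernstein. What remains are the degenerate boundary cases excluded by the strict inequalities of that theorem. When $D = 0$ one has $\phi(x) = 1 + N p^{-x}$, and a direct computation gives $(\ln \phi)'(x) = (-N \ln p)\frac{1}{p^x - (-N)}$, a nonnegative multiple of a completely monotone function by Theorem \ref{th:1ovpxmaisCM} (here $-N \in (0,1)$); hence $\ln \phi$ is a generalized Bernstein function and $\phi$ is log Bernstein. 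The remaining case $N = 0$ forces $D = 0$, so $\phi \equiv 1$ and $\ln \phi \equiv 0$ is trivially a generalized Bernstein function.

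The point to watch -- rather than any single hard step -- is the handling of the endpoints where the cited theorems' strict hypotheses degenerate, namely $D = 0$ in both parts and the constant case $N = D = 0$, for which I substitute the short direct arguments above. All the genuinely analytic content, the complete monotonicity of $\frac{1}{p^x - a}$ and the log Bernstein property of the homographic function, has already been isolated in Theorem \ref{th:1ovpxmaisCM}, Lemma \ref{le:afnisCM}, and Theorem \ref{th:pxpaovpxpblogB}, so the corollary reduces to bookkeeping on the parameter ranges together with the elementary $1 - f$ remark.
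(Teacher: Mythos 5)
Your proof is correct and follows essentially the same route as the paper's: the identical decomposition $\frac{p^x+N}{p^x+D} = 1 - \frac{D-N}{p^x+D}$ combined with Theorem \ref{th:1ovpxmaisCM} and the $1-f$ remark for part (1), and a direct appeal to Theorem \ref{th:pxpaovpxpblogB} for part (2). Your explicit handling of the boundary cases $D=0$ and $N=D=0$, where the cited theorems' strict hypotheses degenerate, is a small but genuine gain in rigor over the paper's proof, which passes over these endpoints silently.
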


\begin{proof} \  For the first claim, we may write the weights squared in the form
$$\alpha_n^2 = \frac{p^x + N}{p^x + D} = 1 -\frac{-N+D}{p^x + D}.$$
Noting $-N + D > 0$, and citing \Cref{th:1ovpxmaisCM}, we have that $\frac{-N+D}{p^x + D}$ is a completely monotone function and therefore that $\frac{p^x + N}{p^x + D}$ is a Bernstein function, as desired.

For the second claim, we may cite \Cref{th:pxpaovpxpblogB} to obtain the result.
\end{proof}

\medskip

Graphical and numerical experiments with \textit{Mathematica} \cite{Wol} strongly suggest that shifts arising from points $(N,D)$ in Sector II do not have weights squared sequences interpolated by a Bernstein function;  indeed, their weights squared sequences are apparently not completely alternating but merely log completely alternating. \ As one might expect, it takes higher order tests to discard a point as one becomes closer to the negative $N$ axis and thus approaches Sector I.

\medskip
\subsection{Sectors {\rm III} and {\rm IV}}

In distinction to the Agler shifts, for which both the weights and the moment sequences are relatively tractable (in the case of the moment sequence because there is useful cancellation of weights), the geometrically regular weighted shifts have simple weights but not particularly simple moments. \  To show that the shifts in Sector III are subnormal, as well as the shifts in Sector IV corresponding to points on the lines $D = p^k N$, we take a Berger measure approach.

We first consider Sector III.

\begin{prop}   \label{prop:measureSectorIII}
Let the weights squared be $\alpha_n = \sqrt{\frac{p^n + N}{p^n + D}} \;\, (n =0,1,\ldots),$ where we assume $p > 1$,$-1 < N \leq 0$, and $-N < D < 1$, with the associated moment sequence $\gamma$. \  Set \, $m_0 := 1$ and
$$m_i := \frac{p(D - p^{i-1}N)}{p^i - 1}, \quad i = 1, 2, \ldots.$$
Define $c_n$ for $n = 0, 1, 2, \ldots$ by
$$c_n := \prod_{i=0}^n m_i,$$
and set
$$a(N,D) := \frac{1}{\sum_{n=0}^\infty c_n}.$$
Then the measure $\mu$ defined by
$$\mu := a(N,D)\left(\sum_{i=0}^\infty c_i \delta_{\frac{1}{p^i}}\right)$$
is representing for $\gamma$. \  It satisfies $\int 1 \, d \mu = 1$, and thus is a probability measure.
\end{prop}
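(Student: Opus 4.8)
The plan is to verify directly that the proposed discrete measure $\mu$ reproduces the moment sequence $\gamma$, and that it is a probability measure. Since $\mu$ is atomic with atoms at $t = p^{-i}$ and masses $a(N,D)\,c_i$, I need to check two things: first, that $\int t^n\,d\mu = \gamma_n$ for all $n$, and second, that the total mass is $1$. The second follows immediately from the definition of $a(N,D)$ as the reciprocal of $\sum_{n=0}^\infty c_n$ (assuming that series converges), so the real content is matching the moments.

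First I would compute the moment integral explicitly. Because $\mu = a(N,D)\sum_{i\ge 0} c_i\,\delta_{1/p^i}$, we have
$$
\int_0^{\|W_\alpha\|^2} t^n\, d\mu(t) = a(N,D)\sum_{i=0}^\infty c_i\, p^{-in}.
$$
The goal is to show this equals $\gamma_n = \prod_{j=0}^{n-1}\frac{p^j+N}{p^j+D}$. Rather than attack the closed form of this sum directly, the efficient route is to reduce the identity $\int t^n\,d\mu = \gamma_n$ to a recursion. The moments satisfy $\gamma_{n+1} = \alpha_n^2\,\gamma_n = \frac{p^n+N}{p^n+D}\,\gamma_n$, so it suffices to show the sequence $I_n := a(N,D)\sum_i c_i p^{-in}$ satisfies the same one-step recursion $I_{n+1} = \frac{p^n+N}{p^n+D}\,I_n$ together with $I_0 = 1$. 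The initial condition $I_0=1$ is exactly the probability-measure statement.

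The heart of the argument is therefore the algebraic identity encoding that recursion. Writing $\lambda := p^{-1}$, one wants $\sum_i c_i \lambda^{i(n+1)}$ and $\sum_i c_i \lambda^{in}$ to be related by the factor $\frac{p^n+N}{p^n+D}$. The natural mechanism is that the $c_i$ are a product of the $m_i$, where $m_i = \frac{p(D-p^{i-1}N)}{p^i-1}$ is precisely engineered so that a telescoping or index-shift manipulation of the sum $\sum_i c_i\lambda^{i(n+1)}$ — splitting off the $m_i$ factor and re-indexing — produces the quotient $\frac{p^n+N}{p^n+D}$ times $\sum_i c_i\lambda^{in}$. In practice I expect to establish a finite-difference relation: multiply $I_n$ by $(p^n+D)$ and $I_{n+1}$ by $(p^n+N)$, expand both as single power series in $\lambda$, and verify coefficient-by-coefficient that $(p^n+N)I_n = (p^n+D)I_{n+1}$ using the defining relation $c_{i}/c_{i-1}=m_i$ to absorb the shift.

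\textbf{The main obstacle} I anticipate is precisely this bookkeeping step: confirming that the specific rational form of $m_i$ makes the re-indexed series collapse to give the exact factor $\frac{p^n+N}{p^n+D}$, and in particular handling the boundary terms ($i=0$, where $m_0=1$, and the pole structure $p^i-1$ in the denominators) correctly so that nothing is dropped. A secondary but genuine concern is convergence: one must confirm $\sum_n c_n < \infty$ so that $a(N,D)$ is well-defined and positive, which requires checking that $m_i \to$ a limit strictly less than $1$ (so the product $c_i$ decays geometrically); this uses the hypotheses $-1<N\le 0$ and $-N<D<1$, since $m_i = \frac{p(D-p^{i-1}N)}{p^i-1} \to -N \cdot \frac{p}{p}\cdot\frac{1}{1}$-type behavior must land in $[0,1)$. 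I would verify the masses $c_i$ are nonnegative (so $\mu$ is a genuine positive measure) from the sign constraints $N\le 0 < -N < D$, and that the atoms $p^{-i}$ lie in the support interval, before concluding that $\mu$ is representing and the shift is subnormal.
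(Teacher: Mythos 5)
Your proposal follows essentially the same route as the paper's proof: reduce the moment identity to the one-step ratio recursion $\hat{\gamma}_{n+1}/\hat{\gamma}_n = \frac{p^n+N}{p^n+D}$ with $\hat{\gamma}_0=1$, clear denominators, and match coefficients of $(p^{-i})^n$ so that the defining relation $c_{i+1}=m_{i+1}c_i$ makes the series collapse, with finiteness of $\sum_n c_n$ via the Ratio Test (since $m_i \to -N \in [0,1)$) and positivity of the $m_i$ from the sector hypotheses. The coefficient-matching step you flag as the main obstacle does go through exactly as you anticipate, yielding $c_{i+1} = \frac{p\,c_i(D-p^iN)}{p^{i+1}-1}$, which is precisely the paper's computation.
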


\begin{proof} \  First observe that under assumptions on $N$ and $D$ each $m_i$ is positive. \  The assertion that $\mu$ is a finite measure follows readily from the Ratio Test, and then that $\mu$ is a probability measure is immediate from the definition of $(N,D)$. \  To show that $\mu$ represents the sequence $\gamma$, it is enough to show that the moment sequence it does yield (say, $\hat{\gamma}$) satisfies $\hat{\gamma}_0 = 1$ and
$$\alpha_n^2 = \frac{\hat{\gamma}_{n+1}}{\hat{\gamma}_{n}}, \quad n = 0, 1, 2, \ldots .$$
The first has been shown above. \  The general condition is that for all $n = 0, 1, 2, \ldots$,
$$ \frac{p^n + N}{p^n + D} = \frac{a(N,D)\left[1 + c_1 \left(\frac{1}{p}\right)^{n+1} + c_1 \left(\frac{1}{p^2}\right)^{n+1} + \ldots\right]}{a(N,D)\left[1 + c_1 \left(\frac{1}{p}\right)^{n} + c_1 \left(\frac{1}{p^2}\right)^{n} + \ldots\right]}.$$
Clearing the denominators, we seek
$$(p^n + N)\left[1 + c_1 \left(\frac{1}{p}\right)^{n} + c_1 \left(\frac{1}{p^2}\right)^{n} + \ldots\right] =
(p^n + D)\left[1 + c_1 \left(\frac{1}{p}\right)^{n+1} + c_1 \left(\frac{1}{p^2}\right)^{n+1} + \ldots\right].$$
One checks readily that the coefficients of the $p^n$ term on each side agree. \  To have the constant ($p^0$) terms match, it is straightforward to check that we require
$$N + c_1 = D + c_1\frac{1}{p},$$
and solving for $c_1$ yields
$$c_1 = \frac{p(D-N)}{p-1},$$
as required. \newline
In general, for $i \geq 1$, to have the terms with $\left(\frac{1}{p^i}\right)^n$ match on the two sides clearly becomes
$$N c_i \left(\frac{1}{p^i}\right)^n + c_{i+1}\left(\frac{1}{p^{i+1}}\right)^n p^n = D c_i\left(\frac{1}{p^i}\right)^n + c_{i+1} \frac{1}{p^{i+1}}\left(\frac{1}{p^i}\right)^n,$$
and this clearly shows that we need
$$c_{i+1} = \frac{p c_i (D - p^i N)}{p^{i+1} -1}.$$
This, with an induction, yields the $m_i$, $c_n$, and measure $\mu$ as claimed.
\end{proof}

The theorem below is then immediate because the existence of a Berger measure guarantees subnormality, and examination of the terms $m_i$ show that they are strictly positive.

\begin{thm}
With the notation as above, points $(N,D)$ in Sector {\rm III} yield subnormal weighted shifts with countably atomic Berger measures.
\end{thm}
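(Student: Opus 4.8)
The plan is to obtain this theorem as an immediate consequence of \Cref{prop:measureSectorIII}, which already carries the entire analytic burden: for every pair $(N,D)$ satisfying $-1 < N \le 0$ and $-N < D < 1$---precisely the defining inequalities of Sector III---that proposition produces an explicit probability measure $\mu = a(N,D)\sum_{i=0}^\infty c_i \delta_{1/p^i}$ representing the moment sequence $\gamma$. First I would confirm that $\mu$ is supported where a Berger measure must be, namely on $[0,\|W_\alpha\|^2]$. Writing $\alpha_n^2 = 1 - \frac{D-N}{p^n+D}$ and using that $D > N$ throughout Sector III (since $D > -N \ge 0 \ge N$), one sees the weights squared increase monotonically to $1$, so $\|W_\alpha\|^2 = 1$; the atoms $1/p^i$ lie in $(0,1]$, hence in $[0,1]$, as required. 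With this, $\mu$ qualifies as a genuine Berger measure, and by the Conway / Gellar--Wallen equivalence recalled in the introduction, $W_\alpha$ is subnormal.

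It remains to upgrade ``atomic'' to ``countably atomic,'' that is, to confirm that $\mu$ carries infinitely many atoms rather than only finitely many. For this I would verify that every weight $c_n$ is strictly positive, which reduces to the strict positivity of each factor $m_i = \frac{p(D - p^{i-1}N)}{p^i - 1}$. Since $p > 1$ gives $p^i - 1 > 0$, and since $N \le 0$ forces $-p^{i-1}N \ge 0$ while $D > 0$, the numerator satisfies $D - p^{i-1}N = D + p^{i-1}(-N) \ge D > 0$ for every $i \ge 1$ (and $m_0 = 1 > 0$ by definition). Hence $c_n = \prod_{i=0}^n m_i > 0$ for all $n$, so the point mass at each $1/p^i$ has strictly positive weight and $\mu$ is supported on the infinite set $\{1/p^i : i \ge 0\}$.

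The only genuine subtlety---really a point of contrast rather than an obstacle---is exactly this sign check on the $m_i$, since it is what distinguishes Sector III from the rays $D = p^k N$ in Sector IV. On those rays the numerator $D - p^{k-1}N$ vanishes at $i = k$, truncating the product and collapsing $\mu$ to a finitely atomic measure; in Sector III the inequalities $N \le 0 < D$ keep every $m_i$ bounded away from zero, guaranteeing that the measure is honestly countably (infinitely) atomic. I expect no difficulty beyond these elementary verifications, as the theorem is a direct corollary of the preceding proposition.
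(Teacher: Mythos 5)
Your proposal is correct and follows essentially the same route as the paper: the theorem is deduced directly from \Cref{prop:measureSectorIII}, with the strict positivity of the $m_i$ (hence of all $c_n$) giving the countably infinite atomic support, exactly as in the paper's one-line argument. Your additional verification that $\|W_\alpha\|^2 = 1$ and that the atoms $1/p^i$ lie in $[0,1]$ is a harmless (and welcome) completeness check the paper leaves implicit.
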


\begin{proof} \  Consideration of the terms $m_i$ shows that they are strictly positive;  the ratio test ensures that the measure is finite, and the support is clearly countable.
\end{proof}

\medskip

Again experiments, numerical and graphical, with \textit{Mathematica} \cite{Wol} strongly suggest that the weights squared sequences arising from Sector III are not log completely alternating (let alone interpolated by a log Bernstein function), and therefore the shifts are subnormal but not $\mathcal{MID}$.

The same measure-theoretic approach may be used on the special lines $D = p^k N$ in Sector IV. \  It is a computation to check that nothing in the identification of the measure as above is altered for these pairs $(N,D)$.

\begin{thm}  \label{th:fintomicSIV}
Consider some pair $(N,D)$ on the ray $D = p^k N$ in Sector {\rm IV} for some $k =  1,2, \ldots$ and with $D > 0$. \  Then $m_i > 0$, $i = 0, 1,2, \ldots, k$, $m_{k+1} = 0$, and thus $c_i > 0$, $i = 0, 1,2, \ldots, k$, and $c_i = 0$, $i \geq k+1$. \  The associated weighted shift then has a $(k+1)$--atomic Berger measure and is subnormal but not $\mathcal{MID}$.
\end{thm}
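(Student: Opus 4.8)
The plan is to specialize the formula for $m_i$ from \Cref{prop:measureSectorIII} to the ray $D = p^k N$, read off the signs of $m_i$ and $c_i$, re-run the representing-measure verification in the now-truncated situation, and finally invoke the cited structural fact about finitely atomic Berger measures to rule out $\mathcal{MID}$.

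First I would substitute $D = p^k N$ into $m_i = \frac{p(D - p^{i-1}N)}{p^i - 1}$, using $D - p^{i-1}N = p^k N - p^{i-1}N = N(p^k - p^{i-1})$, so that
$$m_i = \frac{pN\,(p^k - p^{i-1})}{p^i - 1}, \qquad i = 1, 2, \ldots.$$
Since $(N,D)$ lies in Sector IV with $D > 0$ we have $N > 0$, and $p^i - 1 > 0$ for $i \ge 1$; hence the sign of $m_i$ is exactly the sign of $p^k - p^{i-1}$. Thus $m_i > 0$ precisely when $i - 1 < k$, i.e.\ for $1 \le i \le k$, while $m_{k+1} = pN(p^k - p^k)/(p^{k+1}-1) = 0$; together with $m_0 = 1$ this gives the claims on the $m_i$. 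Because $c_n = \prod_{i=0}^n m_i$, the product consists of positive factors for $n \le k$ and contains the vanishing factor $m_{k+1}$ for every $n \ge k+1$, so $c_i > 0$ for $0 \le i \le k$ and $c_i = 0$ for $i \ge k+1$.

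Next I would confirm that the measure of \Cref{prop:measureSectorIII} still represents $\gamma$ in this truncated setting. The coefficient-matching argument there uses only the recursion $c_{i+1} = \frac{p\,c_i\,(D - p^i N)}{p^{i+1}-1}$, and at $i = k$ this reads $c_{k+1} = p\,c_k \cdot 0/(p^{k+1}-1) = 0$, so the recursion is compatible with the termination and the verification goes through verbatim with the infinite series replaced by the finite sum $\sum_{i=0}^k c_i$. Consequently $a(N,D) = \big(\sum_{i=0}^k c_i\big)^{-1}$ is a well-defined positive normalizing constant and
$$\mu = a(N,D)\sum_{i=0}^k c_i\,\delta_{1/p^i}$$
is a probability measure supported on the $k+1$ distinct points $1, 1/p, \ldots, 1/p^k$ with strictly positive weights. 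Hence $\mu$ is genuinely $(k+1)$--atomic and, being a Berger measure for $\gamma$, witnesses that $W_\alpha$ is subnormal.

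Finally, for the failure of $\mathcal{MID}$ I would appeal to the structural principle recalled in the introduction: a shift whose Berger measure is finitely atomic with at least two atoms loses subnormality for some Schur $1/n$--th root and so is not $\mathcal{MID}$ (\cite{SS}, \cite{CE}). Since $k \ge 1$, the measure $\mu$ carries at least the two atoms at $1$ and $1/p$, placing $W_\alpha$ outside the ``very special'' single-atom case, and the conclusion follows. The sign analysis and the measure construction are routine once the substitution $D = p^k N$ is made; the only genuinely delicate point is this last step, and I expect the main work to lie in making precise (or pinning down the exact citation for) why a Berger measure with two or more atoms obstructs some Schur root, equivalently why its moment sequence fails to be log completely monotone.
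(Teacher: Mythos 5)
Your proposal is correct and takes essentially the same route as the paper, which likewise specializes the $m_i$ of Proposition \ref{prop:measureSectorIII} to the ray $D = p^k N$, observes that the recursion $c_{i+1} = \frac{p\,c_i(D - p^i N)}{p^{i+1}-1}$ truncates so the measure verification survives with a finite sum, and then rules out $\mathcal{MID}$ by the support arguments of \cite{SS} and \cite{CE}; your write-up merely makes explicit the computations the paper labels as routine. One small precision worth keeping in mind: the obstruction to Schur roots requires at least two atoms in $(0,\infty)$ (a measure with atoms only at $0$ and one positive point still yields an $\mathcal{MID}$ shift), a hypothesis your atoms $1$ and $1/p$ do satisfy.
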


\begin{proof} \  One computes readily that $m_i > 0$, $i = 0, 1, \ldots, k$, and $m_{k+1}= 0$, yielding the $c_i$ as claimed. \  Support arguments as in \cite{SS} and \cite{CE} show that the corresponding shift cannot be $\mathcal{MID}$.
\end{proof}

\medskip

Observe that Corollary \ref{cor:subshifts} follows from the results above for the various sectors upon expressing the weights
$$\alpha_n = \sqrt{\frac{K p^n + N}{K p^n + N+j-1}}, \quad n = 0, 1,2, \ldots $$
as
$$
\alpha_n = \sqrt{\frac{p^n + N/K}{p^n + (N+j-1)/K}}, \quad n = 0, 1,2, \ldots \, .
$$

\medskip
\subsection{Subsector VIIIA}

To show that the shifts arising from points in Subsector VIIIA are completely hyperexpansive, we employ a standard device from the literature (see, e.g., \cite{At}) to show that the moment sequence $\gamma$ is completely alternating by examining instead $\Delta \gamma = - \nabla \gamma$. \   Recall that a shift with moment sequence $\gamma = (\gamma_n)$ is completely hyperexpansive if $\gamma$ is completely alternating. \  It is well known that $\gamma$ is completely alternating if and only if $\Delta \gamma$ is completely monotone (\cite[Ch. 4, Lemma 6.3]{BCR}).

\begin{thm}
Let $(N,D)$ be a point in Subsector VIIIA (that is, the  subsector of Sector {\rm VIII} bounded by the rays $D = N$ and $D = p N$). \  Then the associated weighted shift is completely hyperexpansive.
\end{thm}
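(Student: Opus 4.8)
The plan is to reduce complete hyperexpansivity to a completely–monotone–sequence statement and then produce an explicit positive representing measure. Recall that $W_\alpha$ is completely hyperexpansive exactly when its moment sequence $\gamma$ is completely alternating, and that by \cite[Ch. 4, Lemma 6.3]{BCR} this is equivalent to $\Delta \gamma$ being completely monotone, i.e.\ to $(\Delta\gamma)_n = \gamma_{n+1}-\gamma_n$ being a Hausdorff moment sequence $\int_0^1 t^n\, d\rho(t)$ for some positive measure $\rho$. So the whole problem becomes: exhibit such a $\rho$. First I would record the closed product form $\gamma_n = \prod_{j=0}^{n-1}\frac{p^j+N}{p^j+D}$ and, writing $q := 1/p \in (0,1)$ and using the $q$--Pochhammer symbol $(a;q)_n := \prod_{j=0}^{n-1}(1-aq^j)$, rewrite it as $\gamma_n = \frac{(-N;q)_n}{(-D;q)_n}$. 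Since $-1 < D \le N < 0$ throughout Sector VIII, the sequence $\gamma_n$ is increasing and bounded, with finite limit $\gamma_\infty = \frac{(-N;q)_\infty}{(-D;q)_\infty}$.

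The key computational step is to expand the tail $\gamma_\infty - \gamma_n$ into a convergent series of geometric sequences $q^{kn} = (p^{-k})^n$. Using $\frac{(a;q)_\infty}{(a;q)_n} = (aq^n;q)_\infty$ one gets $\gamma_n/\gamma_\infty = P(q^n)$, where $P(w) := \frac{(-Dw;q)_\infty}{(-Nw;q)_\infty}$, and the $q$--binomial theorem then supplies the explicit Taylor expansion
\[
P(w) = \sum_{k \ge 0} \frac{(D/N;q)_k}{(q;q)_k}(-N)^k\, w^k .
\]
Subtracting from $1$ and differencing in $n$, I would arrive at $(\Delta\gamma)_n = \sum_{k \ge 1} b_k\,(p^{-k})^n$ with $b_k = \gamma_\infty (1-q^k)(-N)^k \frac{-(D/N;q)_k}{(q;q)_k}$, which identifies the candidate representing measure as the atomic measure $\rho = \sum_{k \ge 1} b_k\, \delta_{p^{-k}}$ supported in $(0,1)$ (atoms at $p^{-k}$, echoing the Berger measure of \Cref{prop:measureSectorIII}).

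The hard part—and the only place the two defining rays of Subsector VIIIA enter—is the positivity $b_k \ge 0$ for every $k$, equivalently $(D/N;q)_k \le 0$. Put $\lambda := D/N$; because $N < 0$, the inequalities $pN \le D \le N$ translate into $1 \le \lambda \le p = 1/q$. In the product $(D/N;q)_k = \prod_{j=0}^{k-1}(1-\lambda q^j)$, the $j=0$ factor $1-\lambda$ is $\le 0$ (this uses $D \le N$, the ray $D=N$), while every factor with $j \ge 1$ satisfies $1-\lambda q^j \ge 1-\lambda q \ge 0$ precisely because $\lambda \le 1/q$ (this uses $D \ge pN$, the ray $D = pN$). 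Hence exactly one factor is nonpositive and all others are nonnegative, so $(D/N;q)_k \le 0$ and $b_k \ge 0$. This produces the positive measure $\rho$, so $\Delta\gamma$ is completely monotone and $\gamma$ is completely alternating, proving $W_\alpha$ is completely hyperexpansive. I expect the routine friction to be bookkeeping the $q$--binomial expansion and checking convergence (which follows from $|N|q^n < 1$), but the conceptual heart is this one–negative–factor sign analysis, in which both bounding rays of VIIIA are exactly what is required.
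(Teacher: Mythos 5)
Your proof is correct, and it takes a genuinely different route from the paper's. The paper never expands $\Delta \gamma$ explicitly: it shares with you only the opening reduction (via \cite[Ch. 4, Lemma 6.3]{BCR}) to showing $\Delta \gamma$ completely monotone, and then proceeds by a ratio trick, computing $w_n = (\Delta \gamma)_{n+1}/(\Delta \gamma)_n = \frac{1}{p}\left(\frac{p^n+N}{p^n+D/p}\right)$ and observing that $(N, D/p)$ lands in Sector I precisely when $(N,D)$ is in Subsector VIIIA; it then invokes the Sector I results (weights squared interpolated by a Bernstein function, hence log completely alternating) and the $\mathcal{MID}$-type sequence machinery to conclude that reconstituting the ``moments'' $(\Delta\gamma)_n$ from the log completely alternating $w_n$ gives a (log) completely monotone sequence. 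You instead diagonalize $\Delta\gamma$ outright: writing $\gamma_n = (-N;q)_n/(-D;q)_n$ with $q = 1/p$ and applying the $q$--binomial theorem to $P(w) = (-Dw;q)_\infty/(-Nw;q)_\infty$ yields $(\Delta\gamma)_n = \sum_{k\geq 1} b_k\, p^{-kn}$ with $b_k = -\gamma_\infty (1-q^k)(-N)^k (D/N;q)_k/(q;q)_k$, and your one-negative-factor analysis of $(\lambda;q)_k$ for $\lambda = D/N \in [1,p]$ is exactly right, including the boundary behavior ($\lambda = 1$ gives all $b_k = 0$, the unweighted shift; $\lambda = p$ kills $b_k$ for $k \geq 2$). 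What each route buys: the paper's argument is short, reuses the sector machinery, and displays the radial rescaling $(N,D)\mapsto (N,D/p)$ that maps VIIIA onto (part of) Sector I; yours is self-contained modulo classical $q$--series identities, produces the explicit representing measure $\rho = \sum_{k\geq 1} b_k \delta_{p^{-k}}$ for $\Delta\gamma$ (a L\'evy-type companion to the explicit Berger measure of Proposition \ref{prop:measureSectorIII}, with atoms at the same points $p^{-k}$), and --- since a finite signed measure on $[0,1]$ is determined by its moments --- your formula shows more: when $D < pN$ in Sector VIII some $b_k$ turns negative (already $b_2 < 0$ for $p < \lambda < p^2$), so complete hyperexpansivity genuinely fails off VIIIA, which bears on Conjecture \ref{conjectures}(3), something the paper only supports by \textit{Mathematica} experiments. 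Two small points to tighten: justify the interchange of differencing and summation by the geometric bound $|c_k| \leq C|N|^k$ (uniform in $n \geq 0$, since $|{-N}q^n| \leq |N| < 1$), and note that you only need the easy direction of the Hausdorff moment theorem, namely $(\nabla^m \Delta\gamma)_n = \int_0^1 t^n(1-t)^m\, d\rho(t) \geq 0$.
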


\begin{proof} \ As noted above it suffices to show that $\Delta \gamma$ is a completely monotone sequence. \  A computation shows that if we produce a sequence $w = (w_n)_{n=0}^\infty$ of ``weights squared'' by setting
$$w_n = \frac{(\Delta \gamma)_{n+1}}{(\Delta \gamma)_{n}}, \quad n=0, 1,2, \ldots$$
then one has
$$w_n = \frac{1}{p} \left(\frac{p^n + N}{p^n + D/p}\right),   \quad n=0,1,2, \ldots.$$
(The ``weights squared'' is because since we have not normalized to force $(\Delta \gamma)_{0} = 1$, these are not weights squared derived from a moment sequence in the usual sense.)  Since under our assumptions on the pair $(N,D)$ we have $(N, D/p)$ in Sector I, the sequence $\left(\frac{p^n + N}{p^n + D/p}\right)_{n=0}^\infty$ is log completely alternating, and then clearly the sequence $(w_n)$ is log completely alternating. \  It is a standard computation to show then that reconstituting the moments $(\Delta \gamma)_n$ yields a log completely monotone sequence, and since log completely monotone implies completely monotone the sequence $(\Delta \gamma)$ is completely monotone as required.
\end{proof}

\medskip

Again substantial testing using \textit{Mathematica} \cite{Wol} suggests that there are no other loci of complete hyperexpansivity (except the trivial case that along the line $D = N$ we obtain the unilateral shift).

\begin{remark}
It is obvious that the weights sequence associated with some point $(N,D)$ is the reciprocal of that associated with $(D,N)$ (reflection across the diagonal $D = N$). \  If we define the subsector IA of Sector {\rm I} to be the reflection of $VIIIA$ across $D= N$, we have that the shifts arising from these points are not only $\mathcal{MID}$, and not even only the stronger property of having weights squared interpolated by a Bernstein function, but with the yet stronger property of being the reciprocals of the weights squared of a completely hyperexpansive shift.
\end{remark}

It is reasonable to ask about the other sectors V, VI, and VII, and the other portions of Sectors IV and VIII. \  While some things can be said, with the exception of some consideration to follow of $k$--hyponormality in Sector VI, these are at the moment somewhat \textit{terra incognita}. \  By the reflection principle just mentioned and straightforward computations, we may say, for example, that shifts arising from points in Sector VI have a weights squared sequence which is log completely monotone. \  We can find in the literature no study of such shifts or operator theoretic properties that follow from this assumption.

Since the Aluthge transform maps the class of $\mathcal{MID}$ shifts bijectively to itself, it is reasonable to consider both $AT$ and $AT^{-1}$ of a GWRS. \  While the computation of $AT(W_\alpha)$ yields a new $\mathcal{MID}$ shift, what results is not recognizable as a familiar one. \  In \cite{BCE3} there is a formula for $AT^{-1}$, but here again what results is, while new, not recognizable in some tidy way.

\medskip
\subsection{Completions}

It is a standard problem to consider some initial (finite) sequence of moments, and ask whether the sequence can be completed to the moment sequence of a weighted shift with some property;  the standard example is that the shift be subnormal. \  Of course this may instead be phrased as completing a weight sequence to that of a subnormal shift (see for example \cite{St}, \cite{CF1}, and subsequent papers). \   We turn next to showing that the class of GRWS is sufficiently rich to provide, in fact, numerous completions of the first three moments of a contractive shift arising from a two-atomic Berger measure with a mass at $1$. \  Recall that $\delta_x$ denotes the usual Dirac mass at $x$.

We first show that with a natural restriction on the support set there is only one completion.

\begin{prop}
Let $\gamma$ be the moment sequence arising from the Berger measure $\frac{1}{1 + a}\left(\delta_1 + a \delta_r\right)$, where $0 < r < 1$ and $a > 0$. \  Clearly we may write $r = \frac{1}{p}$ for some $p > 1$. \ Then the initial moment sequence $\gamma_0, \gamma_1, \gamma_2$ can be completed to the moment sequence of a GRWS with exponential parameter $p$ only by taking $N = \frac{a}{p}$ and $D = a$, and this completion is exactly the original shift and Berger measure.
\end{prop}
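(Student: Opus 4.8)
The plan is to translate the requirement that a GRWS with exponential parameter $p$ reproduce the three given moments into two equations in the unknowns $(N,D)$, and then to observe that, after clearing denominators, these form a linear system whose coefficient determinant is nonzero, forcing a unique solution. First I would record the data: since the Berger measure is $\frac{1}{1+a}(\delta_1 + a\delta_r)$ with $r=\frac1p$, its moments are $\gamma_n = \frac{1}{1+a}\bigl(1 + a p^{-n}\bigr)$, so $\gamma_0 = 1$, $\gamma_1 = \frac{p+a}{p(1+a)}$, and $\gamma_2 = \frac{p^2+a}{p^2(1+a)}$. For a GRWS with parameter $p$ and pair $(N,D)$ one has $\gamma_0 = 1$ automatically, while $\gamma_1 = \alpha_0^2 = \frac{1+N}{1+D}$ and $\gamma_2 = \alpha_0^2\alpha_1^2$. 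Hence matching the three moments is equivalent to the pair of conditions $\frac{1+N}{1+D} = \gamma_1$ and $\frac{p+N}{p+D} = \gamma_2/\gamma_1$, the second arising from $\alpha_1^2 = \gamma_2/\gamma_1$.

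Next I would clear denominators. The first condition becomes $p(1+a)(1+N) = (p+a)(1+D)$ and the second $p(p+a)(p+N) = (p^2+a)(p+D)$; rearranged, these read as the linear system
\[
\begin{cases} p(1+a)\,N - (p+a)\,D = a(1-p),\\ p(p+a)\,N - (p^2+a)\,D = pa(1-p). \end{cases}
\]
The coefficient determinant is $p\bigl[(p+a)^2 - (1+a)(p^2+a)\bigr]$, and the bracket collapses to $-a(p-1)^2$, so the determinant equals $-pa(p-1)^2$, which is strictly negative (hence nonzero) since $p>1$ and $a>0$. Therefore the system has exactly one solution. One then checks directly that $N = \frac{a}{p}$, $D = a$ satisfies both equations, so this is the unique completion.

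Finally, to confirm that this completion is the original shift and Berger measure, I would substitute $(N,D) = (a/p,\,a)$ and telescope the resulting moments: writing each factor as $\frac{p^j + a/p}{p^j + a} = \frac{1}{p}\cdot\frac{p^{j+1}+a}{p^j+a}$, one gets $\gamma_n = \prod_{j=0}^{n-1}\frac{p^j + a/p}{p^j + a} = \frac{1}{p^n}\cdot\frac{p^n + a}{1 + a} = \frac{1}{1+a}\bigl(1 + a p^{-n}\bigr)$, which is exactly the moment sequence of the given two-atomic measure. Since a subnormal weighted shift determines its Berger measure uniquely, the completing GRWS coincides with the original shift, and its representing measure is the original one.

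I expect the computation to be essentially routine bookkeeping; the one point demanding care is the determinant, and specifically the algebraic identity $(p+a)^2 - (1+a)(p^2+a) = -a(p-1)^2$ together with its sign. That strict negativity is the crux: it is what upgrades ``a solution exists'' to ``the solution is unique,'' and hence is the only step I regard as a genuine (if mild) obstacle.
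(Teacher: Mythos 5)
Your proof is correct and takes essentially the same route as the paper, which merely sketches the argument as ``just another computation'': match $\gamma_1 = \frac{1+N}{1+D}$ and $\gamma_2/\gamma_1 = \frac{p+N}{p+D}$ against the moments of the two-atomic measure and solve for $(N,D)$. Your determinant computation $-pa(p-1)^2 \neq 0$ establishing uniqueness, and the telescoping identity $\frac{p^j + a/p}{p^j + a} = \frac{1}{p}\cdot\frac{p^{j+1}+a}{p^j+a}$ confirming that the full moment sequence reproduces the original measure, simply make explicit the details the paper leaves to the reader.
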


\begin{proof} \  One computes readily that $\gamma_0 = 1$, $\gamma_1 = \frac{1 + \frac{a}{p}}{1 + a}$, and $\gamma_2 = \frac{1 + \frac{a}{p^2}}{1 + a}$. \  It is then just another computation, using the explicit form of the weights, and hence the moments, for a GRWS with parameters $p$, $N$, and $D$ to obtain the result.
\end{proof}

If we permit ourselves to use a different geometric parameter $q$, there is a much richer collection of completions.

\begin{thm}
Let $\gamma$ be the moment sequence arising from the Berger measure $\frac{1}{1 + a}\left(\delta_1 + a \delta_\frac{1}{p}\right)$, where $p > 1$ and $a > 0$. \  Then there is an infinite family of completions of the initial moment sequence $\gamma_0, \gamma_1, \gamma_2$ to a GRWS with parameters $q \neq p$, $N$, and $D$, which may be parametrized by $N$, such that
\begin{itemize}
  \item If $-1 < N \leq \frac{a-a p}{(a+1) p}$ then the resulting GRWS is from Sector {\rm I} ($\mathcal{MID}$ with weights squared interpolated by a Bernstein function);
  \item If $\frac{a-a p}{(a+1) p} < N \leq \frac{a-a p}{a p+a+2 p}$ then the resulting GRWS is from Sector {\rm II} ($\mathcal{MID}$ with weights squared interpolated by a log Bernstein function);
  \item If $\frac{a-a p}{a p+a+2 p} < N \le 0$ then the resulting GRWS is from Sector {\rm III} and is subnormal.
\end{itemize}
\end{thm}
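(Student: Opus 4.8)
The plan is to match the first three moments of the given two-atomic measure against the moments of a general GRWS with a fresh geometric parameter $q$ and solve the resulting system, reading off the sector from the value of $N$. First I would record that $\frac{1}{1+a}(\delta_1 + a\delta_{1/p})$ has moments $\gamma_0 = 1$, $\gamma_1 = \frac{1+a/p}{1+a}$, and $\gamma_2 = \frac{1+a/p^2}{1+a}$, while a candidate GRWS with parameters $q,N,D$ has $\gamma_1 = \frac{1+N}{1+D}$ and $\gamma_2 = \frac{1+N}{1+D}\cdot\frac{q+N}{q+D}$. Matching $\gamma_1$ is a single affine relation determining $D$ from $N$, namely $1+D = \frac{p(1+N)(1+a)}{p+a}$; a short computation then gives $D - N = \frac{a(p-1)(1+N)}{p+a} > 0$, so $D > N$ throughout and $D$ is strictly increasing in $N$. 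This is precisely the feature that will let me assign sector membership from the size of $N$ alone.

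Next I would solve for $q$ from the matched $\gamma_2$. Writing $\rho := \gamma_2/\gamma_1 = \frac{q+N}{q+D}$, I would regard $q \mapsto \frac{q+N}{q+D}$ as a function on $(1,\infty)$: since $D > N$ it is strictly increasing, sending $q=1$ to $\gamma_1$ and $q \to \infty$ to $1$. The crux is to verify that the target $\rho$ lies strictly between $\gamma_1$ and $1$. The inequality $\rho > \gamma_1$ is exactly $\gamma_2 > \gamma_1^2$, which holds strictly by Cauchy--Schwarz because the measure is genuinely two-atomic and not a point mass; the inequality $\rho < 1$ is $\gamma_2 < \gamma_1$, which holds since $t^2 < t$ on the part of the support in $(0,1)$. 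Hence there is a unique $q > 1$, given explicitly by $q = \frac{\rho D - N}{1 - \rho}$. To see $q \neq p$, I would invoke the preceding proposition: any completion with geometric parameter equal to $p$ is forced to have $N = a/p > 0$, whereas the present range has $N \leq 0$.

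Finally I would locate $(N,D)$ among the sectors using the monotonicity of $D$ in $N$. The boundary between Sectors I and II is $D = 0$; setting $D = 0$ in the affine relation gives $N = \frac{a - ap}{(a+1)p}$, the first stated threshold. The boundary between Sectors II and III is $D = -N$; imposing $1 - N = \frac{p(1+N)(1+a)}{p+a}$ gives $N = \frac{a - ap}{ap + a + 2p}$, the second threshold. Since $D$ increases with $N$ and $D > N$ always, these two thresholds partition $(-1,0]$ into exactly the three intervals of the statement, placing the completion in Sector I, II, or III in turn. The operator-theoretic conclusions then follow by citing \Cref{cor:SecIBSecIIlogB} (Bernstein, respectively log Bernstein, interpolation, hence $\mathcal{MID}$) for Sectors I and II and \Cref{prop:measureSectorIII} (countably atomic Berger measure, hence subnormal) for Sector III.

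The step I expect to require the most care is establishing a legitimate new parameter $q > 1$ with $q \neq p$: it is here that one must combine the strict moment inequalities $\gamma_1^2 < \gamma_2 < \gamma_1$ with the monotonicity of $\frac{q+N}{q+D}$, and also confirm that the resulting pair $(N,D)$ genuinely lands in the open unit square so that it defines a GRWS in the intended sense. The remaining threshold identities and the sector bookkeeping are routine algebra.
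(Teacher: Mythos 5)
Your proposal follows essentially the same route as the paper's proof: match the first three moments of the two-atomic measure against a GRWS with fresh parameter $q$, obtain the affine relation $1+D=\frac{p(1+a)(1+N)}{p+a}$ (whence $D-N=\frac{a(p-1)(1+N)}{p+a}>0$ and $D$ increases in $N$), and read off the sector thresholds from $D=0$ and $D=-N$, which give exactly the stated cutoffs $\frac{a-ap}{(a+1)p}$ and $\frac{a-ap}{ap+a+2p}$. The paper simply records the closed-form solutions $q=\frac{a+Np^2-Np+p^2}{a+p}$ and $D=\frac{aNp+ap-a+Np}{a+p}$ and checks signs and monotonicity; your substitute for the explicit $q$ --- monotonicity of $q\mapsto\frac{q+N}{q+D}$ on $(1,\infty)$ combined with the strict moment inequalities $\gamma_1^2<\gamma_2<\gamma_1$ --- is a correct and arguably cleaner existence-and-uniqueness argument, and your derivation of $q\neq p$ from the preceding proposition is the same reasoning the paper leaves implicit.

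However, the one step you defer as routine --- ``confirm that the resulting pair $(N,D)$ genuinely lands in the open unit square'' --- is not routine, and in fact it fails on part of the third bullet's range for some admissible $(a,p)$. Since $D$ is increasing in $N$, the worst case is $N=0$, where $D=\frac{a(p-1)}{a+p}$; this is $<1$ precisely when $a(p-2)<p$. Take $a=2$, $p=10$: then $\gamma_1=\frac{2}{5}=\frac{1+N}{1+D}$ at $N=0$ forces $D=\frac{3}{2}$, which lies outside the square, so no GRWS completion exists there and \Cref{prop:measureSectorIII} (which requires $-N<D<1$) cannot be invoked on all of $\left(\frac{a-ap}{ap+a+2p},\,0\right]$. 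In fairness, the paper's own proof asserts ``$-N<D\le 1$'' on this range as a straightforward computation and has exactly the same defect, so your instinct in flagging this verification was sound; but dismissing it as bookkeeping leaves the single genuinely delicate point unresolved. A complete argument must either impose $a(p-2)<p$ as a hypothesis or truncate the third bullet's range at $N=\frac{p+2a-ap}{p(1+a)}$, the value at which $D$ reaches $1$. Everything else in your proposal checks out.
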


\begin{proof} \ One computes, as in the proof of the previous proposition and by comparing moments, that
$$q = \frac{a+\text{N} p^2-\text{N} p+p^2}{a+p} \quad \mbox{\rm and} \quad D = \frac{a \text{N} p+a p-a+\text{N} p}{a+p}.$$
There are various things to ensure, and it helps to observe that both $q$ and $D$ are increasing in $N$. \  Straightforward computations show that to obtain $q > 1$ requires $N > -1$; that $D > N$ if $N > -1$;  that $D \leq 0$ (so we are in Sector I) if $-1 < N \leq \frac{a-a p}{(a+1) p}$, where we note that $\frac{a-a p}{(a+1) p} = \frac{a}{1+a} \cdot \frac{1-p}{p}$ is strictly between $-1$ and $0$ using our assumptions on $a$ and $p$; that $0 < D \leq -N$ (so we are in Sector II) if $\frac{a-a p}{(a+1) p} < N \leq \frac{a-a p}{a p+a+2 p}$, noting that this is a nontrivial interval contained in $(-1, 0)$ from our assumptions on $a$ and $p$;  that $-N < D \leq 1$ (so we are in Sector III) if $\frac{a-a p}{a p+a+2 p} < N \le 0$.
\end{proof}

We leave to the interested reader the computation of a specific example (which may be chosen ``at random'' if the statements in Conjecture \ref{conjectures} hold, but in any event such an example exists) such that the same initial moment sequence $\gamma_0, \gamma_1, \gamma_2$ may be completed to various flavors of $\mathcal{MID}$ as well as subnormal but not $\mathcal{MID}$ and to both finitely and countably atomic support sets. \  Further, the class of GRWS is itself sufficiently rich to provide such completions.

We remark as well in passing that one cannot complete, with a GRWS, the initial three-term moment sequence arising from a two-atomic Berger measure with atoms at $0$ and $1$ if the density at $0$ is positive.

If one considers the initial segment $\gamma_0, \gamma_1, \gamma_2, \gamma_3$ arising from a three atomic contractive Berger measure with an atom at $1$, we have been unable to find any completion from among the GRWS unless the initial moment sequence is from a shift which is a GRWS arising from some $p > 1$ and a point $(N,D)$ in Sector IV along the line $D = p^2 N$;  in that case, there is no solution in $q$, $M$, and $P$ except the obvious solution $q = p$, $M = N$, and $P = D$.

\medskip
\subsection{Sector {\rm IV} and $k$--hyponormality}

We now turn to the task of proving that the special lines $D = p^k N$ subdivide Sector IV into subsectors in which we have $k$--hypormality but not $(k+1)$--hyponormality. \  Let $(N,D)$ be a point in Sector IV, so $0 < N < D < 1$. \  The discussion of $k$--hyponormality requires that we consider the positive (semi-)definiteness of families of matrices of the form
$$
\hat{M}_{\gamma}(k,j) = \left(
\begin{array}{cccc}
\gamma _{j} & \gamma _{j+1} & \cdots & \gamma _{j+k} \\
\gamma _{j+1} & \gamma _{j+2} & \cdots & \gamma _{j+k+1} \\
\vdots & \vdots & \ddots & \vdots \\
\gamma _{j+k} & \gamma _{k+j+1} & \cdots & \gamma _{j+2k}%
\end{array}
\right),
$$
where $k$--hyponormality is the positive definiteness, for each $j$, of this $(k+1) \times (k+1)$ matrix and $\gamma$ is the moment sequence of the relevant shift. \  In fact, it will be convenient to factor out $\gamma_j$ (which will surely not affect positivity) and we shall actually work with the $k$ by $k$ matrices (with $k \geq 2$)
$$
M(k,j) = \left(
\begin{array}{cccc}
1 & \alpha^2_{j} & \cdots & \alpha^2_{j}\cdots \alpha^2_{j+k-2}\\
\alpha^2_{j} & \alpha^2_{j}\alpha^2_{j+1} & \cdots & \alpha^2_{j}\cdots \alpha^2_{j+k-1} \\
\vdots & \vdots & \ddots & \vdots \\
\alpha^2_{j}\cdots \alpha^2_{j+k-2} & \alpha^2_{j}\cdots \alpha^2_{j+k-1} & \cdots & \alpha^2_{j}\cdots \alpha^2_{j+2k-3}%
\end{array}
\right).
$$
(To move to matrices of size $k$ will ease the notation, but at the cost of remembering that positivity of the matrix yields $(k-1)$--hyponormality.) We will, as usual, determine the positivity of the matrices by using the Nested Determinant Test, which will require us to obtain a (reasonably) tractable form for the determinant.

We will obtain the determinants using what is called ``The Condensation Method'' in \cite{Kr} (which discusses possible original sources for the method), and we begin with its Proposition 10.

\begin{prop}\cite[Prop. 10]{Kr}
Let $A$ be an $n$ by $n$ matrix. \  Denote the submatrix of $A$ in which rows $i_1, i_2, \ldots, i_k$ and columns $j_1, j_2, \ldots, j_k$ are omitted by $A^{j_1, j_2, \ldots, j_k}_{i_1, i_2, \ldots, i_k}$. \  Then there holds
\begin{equation}\label{eq:basicKratt}
\det A \cdot \det A^{1,n}_{1,n} = \det A^1_1
\cdot \det A^n_n - \det A^n_1 \cdot \det A^1_n .
\end{equation}
\end{prop}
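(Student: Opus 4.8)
The plan is to deduce \eqref{eq:basicKratt} from Jacobi's classical theorem on the minors of the inverse (equivalently, the adjugate), after first reducing to the case of an invertible matrix. For that reduction I would treat the $n^2$ entries $a_{ij}$ as independent indeterminates over $\mathbb{Z}$: each side of \eqref{eq:basicKratt} is then a polynomial in the $a_{ij}$ with integer coefficients, so it suffices to verify the identity over the fraction field $\mathbb{Q}(a_{ij})$, where $\det A$ is a nonzero polynomial and hence $A$ is invertible. Once proved generically, the identity specializes to every matrix over every commutative ring; equivalently, over $\mathbb{R}$ one may appeal to the density of invertible matrices and the continuity of both sides.

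So assume $\det A \neq 0$ and set $B := A^{-1}$. By Cramer's rule the entries of $B$ are the normalized cofactors
\[
b_{ij} = \frac{(-1)^{i+j}}{\det A}\,\det A^{\,i}_{\,j},
\]
with $A^{\,i}_{\,j}$ obtained by deleting row $j$ and column $i$, as in the statement's convention. First I would compute the $2 \times 2$ minor of $B$ in rows and columns $\{1,n\}$, namely $b_{11}b_{nn} - b_{1n}b_{n1}$, and substitute these cofactor formulas. The two off-diagonal sign factors are $(-1)^{1+n}$ and $(-1)^{n+1}$, whose product is $(-1)^{2n+2}=1$, so this minor collapses to
\[
b_{11}b_{nn} - b_{1n}b_{n1} = \frac{1}{(\det A)^2}\bigl(\det A^{1}_{1}\,\det A^{n}_{n} - \det A^{n}_{1}\,\det A^{1}_{n}\bigr),
\]
which is exactly $(\det A)^{-2}$ times the right-hand side of \eqref{eq:basicKratt}.

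The second and decisive ingredient is Jacobi's minor theorem: for index sets $I,J$ one has $\det\bigl(B[I\,|\,J]\bigr) = (-1)^{\sigma(I)+\sigma(J)}(\det A)^{-1}\det\bigl(A[\,\bar J\,|\,\bar I\,]\bigr)$, where $\sigma(I)=\sum_{i\in I} i$ and a bar denotes the complementary index set. Taking $I=J=\{1,n\}$ gives $\sigma(I)+\sigma(J)=2(1+n)$, so the sign is $+1$, while $\bar I=\bar J=\{2,\dots,n-1\}$, so the complementary minor is precisely $\det A^{\,1,n}_{\,1,n}$. Hence the same $2\times2$ minor of $B$ also equals $(\det A)^{-1}\det A^{\,1,n}_{\,1,n}$. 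Equating the two evaluations of this minor and multiplying through by $(\det A)^2$ produces \eqref{eq:basicKratt}.

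I expect the only real obstacle to be clerical: matching the sign conventions of Cramer's rule and of Jacobi's theorem to the paper's superscript/subscript convention for deleted columns and rows, since a single stray $(-1)$ would flip the cross term. If a self-contained argument avoiding Jacobi's theorem is preferred, I would instead permute both the rows and the columns so as to bring indices $1$ and $n$ to the front, obtaining a block form with a $2\times 2$ leading block and an $(n{-}2)\times(n{-}2)$ trailing block $S:=A[\{2,\dots,n-1\}]$; expanding the four corner minors and the central minor as Schur complements of $S$ then reduces \eqref{eq:basicKratt} to the scalar $2\times 2$ determinant identity, with the permutation sign $(-1)^{n}$ appearing on rows and on columns and hence cancelling.
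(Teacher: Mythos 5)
Your proposal is correct, and in fact the paper offers no proof of this proposition at all: it is quoted verbatim from \cite[Prop.~10]{Kr} (it is the classical Desnanot--Jacobi identity underlying Dodgson condensation), so there is no internal argument to compare against. Your route --- reduce to the generic case by treating the entries as indeterminates (or by density of invertible matrices and continuity), then equate two evaluations of the $2\times 2$ minor of $B=A^{-1}$ on rows and columns $\{1,n\}$, once via the cofactor formula for $A^{-1}$ and once via Jacobi's theorem on complementary minors --- is precisely the standard proof to which Krattenthaler's sources point, and your bookkeeping is right: the off-diagonal cofactor signs contribute $(-1)^{1+n}(-1)^{n+1}=1$, Jacobi's sign is $(-1)^{2(1+n)}=1$, and both agree with the paper's convention that superscripts index deleted columns and subscripts deleted rows, so multiplying by $(\det A)^2$ yields \eqref{eq:basicKratt} exactly. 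The polynomial-identity reduction is the correct way to dispose of singular $A$ (and it gives the identity over any commutative ring, which is more than the paper needs). Your alternative Schur-complement sketch also works and is worth preferring if self-containment matters: with $S=A^{1,n}_{1,n}$ and $t_{ij}=a_{ij}-r_iS^{-1}c_j$ for $i,j\in\{1,n\}$, one gets $\det A=\det S\,(t_{11}t_{nn}-t_{1n}t_{n1})$, $\det A^1_1=\det S\,t_{nn}$, $\det A^n_n=\det S\,t_{11}$, and $\det A^n_1\cdot\det A^1_n=(-1)^{2(n-2)}(\det S)^2t_{n1}t_{1n}$, so the permutation signs square away and the identity reduces to the trivial $2\times 2$ expansion; this avoids invoking Jacobi's theorem as a black box at the cost of the same genericity reduction (invertibility of $S$).
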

For our matrices $M(k,j)$ we have the following useful identities:
\begin{enumerate}
  \item $M(k,j)^1_1 = \alpha^2_j \alpha^2_{j+1} M(k-1, j+2)$;
  \item $M(k,j)^k_k =  M(k-1, j)$;
  \item $M(k,j)^1_k = M(k,j)^k_1 = \alpha^2_j M(k-1, j+1)$;
  \item $M(k,j)^{1,k}_{1,k}= \alpha^2_j \alpha^2_{j+1} M(k-2, j+2)$.
\end{enumerate}
Upon insertion of the definitions of the $\alpha_j$, translating to determinants, and using the identities just above, we have that, for each $k$ and $j$,
\begin{eqnarray}
\det M(k,j) \cdot  \left(\frac{p^j+N}{p^j+D}\right)^{k-2}&\cdot & \left(\frac{p^{j+1}+N}{p^{j+1}+D}\right)^{k-2} \cdot \det M(k-2, j+2)     \label{eq:Krattforus} \nonumber \\
&& \hspace*{-1in}= \left(\frac{p^j+N}{p^j+D}\right)^{k-1} \left(\frac{p^{j+1}+N}{p^{j+1}+D}\right)^{k-1} \cdot \det M(k-1, j+2) \cdot \det M(k-1, j)  \\
&&  \hspace*{-.8in} - \left(\frac{p^j+N}{p^j+D}\right)^{2 (k-1)} \cdot (\det M(k-1, j+1))^2.  \nonumber
\end{eqnarray}

We sketch below the rather laborious (weak) induction to give the proof of a closed form for $\det M(k, j)$, for such points $(N,D)$, conjectured with the aid of \textit{Mathematica} \cite{Wol} and the computing cluster at Bucknell University.

\begin{lem}  \label{le:detMNj}
For $k \geq 2$ and $j \geq 0$ let $M(k, j)$ be the matrix
$$
M(k,j) = \left(
\begin{array}{cccc}
1 & \alpha^2_{j} & \cdots & \alpha^2_{j}\cdots \alpha^2_{j+k-2}\\
\alpha^2_{j} & \alpha^2_{j}\alpha^2_{j+1} & \cdots & \alpha^2_{j}\cdots \alpha^2_{j+k-1} \\
\vdots & \vdots & \ddots & \vdots \\
\alpha^2_{j}\cdots \alpha^2_{j+k-2} & \alpha^2_{j}\cdots \alpha^2_{j+k-1} & \cdots & \alpha^2_{j}\cdots \alpha^2_{j+2k-3}%
\end{array}
\right),
$$
where the $\alpha_j$ are the weights arising from the point $(N,D)$ with $0 < N < D < 1$. \  We have, for $j = 0, 1, 2, \ldots$,
\begin{equation}  \label{eq:detM2j}
\det M(2,j) = \frac{ p^j (N-D) (1-p) \left(N + p^j\right)}{\left(D + p^j\right)^2 \left(D + p^{j+1}\right)}
\end{equation}
and
\begin{equation}  \label{eq:detM3j}
\det M(3,j) = \frac{ p^{3 j+2} (1-p)^2 (1-p^2)  (N-D)^2 (N p - D) \left(N + p^j\right)^2 \left(N + p^{j+1}\right)}{\left(D + p^j\right)^3 \left(D + p^{j+1}\right)^3 \left(D + p^{j+2}\right)^2 \left(D + p^{j+3}\right)}.
\end{equation}
For $k \geq 4$ and $j \geq 0$, $\det M(k,j)$ has the form
\begin{eqnarray}  \label{eq:detMkj}
\quad & p^{\frac{1}{3}k(k-1)(k-2)} \left(p^j\right)^{\frac{1}{2}k(k-1)} \cdot \dfrac{ \left(\prod _{i=0}^{k-2} \left(1-p^{i+1}\right)^{k-i-1}\right) \left(\prod _{i=0}^{k-2} \left(N p^i - D\right)^{k-i-1}\right)}
{\left(\prod _{\ell=0}^{k-2} \left(D + p^{\ell+j}\right)\right)^k } \times \nonumber \\
& \times \dfrac{\left(\prod _{i=0}^{k-2} \left(N + p^{i+j}\right)^{k-i-1}\right)}{\prod _{i=1}^{k-1} \left(D + p^{j+i+k-2}\right)^{k-i}}.
\end{eqnarray}

\mk
(Note that the expressions for $\det M(2,j)$ and $\det M(3,j)$ fit the general form, and are merely illustrative.)
\end{lem}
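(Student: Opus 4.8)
The plan is to establish the closed form \eqref{eq:detMkj} by a two-step (``weak'') induction on $k$, using the condensation recurrence \eqref{eq:Krattforus} as the engine. Since that recurrence expresses $\det M(k,j)$ through determinants at level $k-1$ (at the three shifts $j$, $j+1$, $j+2$) together with a single determinant at level $k-2$ (at shift $j+2$), the hypothesis must be carried at two consecutive levels $k-1$ and $k-2$ simultaneously, and for all $j \ge 0$. Accordingly I would anchor the induction at $k=2$ and $k=3$ and run the step for $k \ge 4$.

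For the base cases, $\det M(2,j)$ and $\det M(3,j)$ are computed by direct expansion of the $2\times 2$ and $3\times 3$ matrices: inserting $\alpha_n^2 = \frac{p^n+N}{p^n+D}$ and clearing denominators yields rational functions in $p^j$ whose numerators factor precisely as in \eqref{eq:detM2j} and \eqref{eq:detM3j}. One then checks that substituting $k=2$ and $k=3$ into the general template \eqref{eq:detMkj} reproduces these two expressions exactly; this both confirms the template in the stated range and lets the induction start cleanly at $k=4$.

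For the inductive step I would solve \eqref{eq:Krattforus} for the unknown determinant, writing (with $R := \frac{p^j+N}{p^j+D}$ and $S := \frac{p^{j+1}+N}{p^{j+1}+D}$)
$$
\det M(k,j) = \frac{R^{k-1}S^{k-1}\,\det M(k-1,j)\,\det M(k-1,j+2) - R^{2(k-1)}\bigl(\det M(k-1,j+1)\bigr)^{2}}{R^{k-2}S^{k-2}\,\det M(k-2,j+2)}.
$$
The inductive hypothesis supplies all four determinants on the right in closed form, and the entire content of the step is the simplification of this expression to \eqref{eq:detMkj}. The numerator is a difference of two products which, after the substitutions, share the overwhelming majority of their factors (the power of $p$, the products $\prod(1-p^{i+1})$ and $\prod(Np^i-D)$, and nearly all of the $(N+p^{\,\cdot})$ and $(D+p^{\,\cdot})$ factors), differing only through the shifts $j \mapsto j+1, j+2$.

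The main obstacle is precisely this cancellation: one must show that the difference of the two nearly-identical residual pieces collapses to a single product, contributing exactly the new top-index factors that distinguish level $k$ from level $k-1$ -- namely $(1-p^{k-1})$, $(Np^{k-2}-D)$, and $(N+p^{\,j+k-2})$ -- and producing the correct new denominator powers. This ``difference of two products equals a product'' step is the familiar miracle of determinant evaluations and is where all the bookkeeping resides; it is the part that was conjectured and verified with the aid of \textit{Mathematica} \cite{Wol} and the Bucknell cluster. Alongside it one tracks the two exponent identities separately: summing the contributions of $R$, $S$, and the three subdeterminants, the power of $p$ must build up to $\tfrac{1}{3}k(k-1)(k-2)$ and the power of $p^j$ to $\tfrac{1}{2}k(k-1)$, each a routine but necessary arithmetic check. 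Since the hypothesis is invoked at shifts $j$, $j+1$, $j+2$ at level $k-1$ and at $j+2$ at level $k-2$, the induction closes for every $j \ge 0$, completing the proof.
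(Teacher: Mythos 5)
Your proposal is correct and is essentially the paper's own proof: direct computation of the base cases $k=2,3$, then a weak induction for $k\ge 4$ that solves the condensation identity \eqref{eq:Krattforus} for $\det M(k,j)$, invoking the hypothesis at level $k-1$ for the shifts $j,\,j+1,\,j+2$ and at level $k-2$ for $j+2$, with the exponent counts for $p$ and $p^j$ checked separately. The one cancellation you defer to bookkeeping is executed in the paper by extracting the common factor $K$ of the two right-hand-side terms so that their difference reduces to $\frac{N+p^{j+k-1}}{D+p^{j+2k-3}}-\frac{N+p^{j}}{D+p^{j+k-2}} = \frac{p^{j}\,(1-p^{k-1})(Np^{k-2}-D)}{(D+p^{j+k-2})(D+p^{j+2k-3})}$, a one-line collapse producing exactly the factors $(1-p^{k-1})$ and $(Np^{k-2}-D)$ -- though note that the third new factor $(N+p^{j+k-2})$ you attribute to the difference in fact emerges from dividing $K$ by $\det M(k-2,j+2)$, and that \emph{Mathematica} and the cluster were used to conjecture the closed form, not to verify the inductive step, which is a hand computation.
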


\noindent Proof. \  Direct computations of the relevant determinants for $k=2$ and $k=3$ are routine, and form the basis for the (weak) induction on $k$. \  (In giving the results in \eqref{eq:num1RHS}, \eqref{eq:denom1RHS}, \eqref{eq:num2RHS}, and \eqref{eq:denom2RHS} we have boxed certain terms (or the absence of a term) to facilitate the subsequent discussion.)  It is helpful to prove, as part of that induction, that the first term on the right-hand side of \eqref{eq:Krattforus} has numerator of the form
\begin{eqnarray}
p^{\frac{1}{3} (k-2) (k-1) (2 k-3)} \cdot  p^{j (k-1) (k-2)} &\cdot & \prod _{i=0}^{k-3} \left(1-p^{i+1}\right)^{2 (k-2-i)} \, \prod _{i=0}^{k-3} \left(N p^i - D\right)^{2 (k-2-i)} \cdot \nonumber \label{eq:num1RHS} \\
&\cdot& \boxed{\left(N + p^j\right)^{2 k-3}} \, \prod _{i=0}^{k-3} \left(N + p^{i+j+1}\right)^{2 (k-2-i)} \boxed{(N+p^{j+k-1})}
\end{eqnarray}
and denominator of the form
\begin{equation}  \label{eq:denom1RHS}
 \left(\prod _{i=0}^{k-3} \left(D+p^{i+j}\right)\right)^{2 (k-1)}  \boxed{\left(D+p^{j+k-2}\right)^{2 k-3}} \left(\prod _{i=0}^{k-3} \left(D+p^{i+j+k-1}\right)^{2 k-4-2 i}\right)  \boxed{\left(D+p^{j+2 k-3}\right)}.
 \end{equation}
(In obtaining the denominator it is useful to note that, in the denominator
\begin{eqnarray*}
(D + p^j)^{k-1} (1 + p^{j+1})^{k-1} &\cdot & \left(\prod_{i=0}^{k-3}(D + p^{j+2 + i})\right)^{k-1} \cdot \\
&\cdot & \prod_{i=1}^{k-2} (D + p^{j+2+i+k-1-2})^{k-1-i} \cdot \\
&\cdot& \left(\prod_{i=0}^{k-3}(D + p^{j+i})\right)^{k-1} \prod_{i=1}^{k-2} (D + p^{j+i+k-1-2})^{k-1-i}\\
\end{eqnarray*}
of this first term on the right-hand side arising from the product, the first of the terms contributes only to $(D + p^j)$, the second only to $(D + p^{j+1})$, the fifth only to terms $(D + p^j)$ through $(D + p^{j+k-3})$, the third only to terms $(D + p^{j+2})$ through $(D + p^{j+k-1})$, the sixth only to terms $(D + p^{j+k-3})$ through $(D + p^{j+2k-5})$, and the fourth only to terms $(D + p^{j+k-1})$ through $(D + p^{j+2k-3})$.)

Similarly, the second term on the right-hand side of \eqref{eq:Krattforus} has numerator of the form
\begin{eqnarray}
p^{\frac{1}{3} (k-2) (k-1) (2 k-3)} \cdot  p^{j (k-1) (k-2)} &\cdot & \prod _{i=0}^{k-3} \left(1-p^{i+1}\right)^{2 (k-2-i)} \, \prod _{i=0}^{k-3} \left(N p^i - D\right)^{2 (k-2-i)} \cdot  \nonumber\\
&\cdot& \boxed{\left(N + p^j\right)^{2 k-2}} \, \prod _{i=0}^{k-3} \left(N + p^{i+j+1}\right)^{2 (k-2-i)} \framebox[15mm]{\rule{0pt}{5mm}}  \label{eq:num2RHS}
\end{eqnarray}
and denominator of the form
\begin{equation}  \label{eq:denom2RHS}
\left(\prod _{i=0}^{k-3} \left(D+p^{i+j}\right)\right)^{2 (k-1)} \boxed{\left(D+p^{j+k-2}\right)^{2 k-2}} \left(\prod _{i=0}^{k-3} \left(D+ p^{i+j+k-1}\right)^{2 k-4- 2 i}\right) \framebox[15mm]{\rule{0pt}{5mm}}.
 \end{equation}

The expressions for the numerators in \eqref{eq:num1RHS} and \eqref{eq:num2RHS}, and those for the denominators in \eqref{eq:denom1RHS} and \eqref{eq:denom2RHS}, differ only in a few terms, and (as noted above) to facilitate the comparison we have indicated by boxes those places where the terms are not equal. \  If we set $K$ to be what is in common in the two terms on the right-hand side of \eqref{eq:Krattforus}, we see that $K$ is given as
$$
\scriptstyle{\frac{p^{\frac{1}{3} (k-2) (k-1) (2 k-3)} \cdot  p^{j (k-1) (k-2)} \cdot  \prod _{i=0}^{k-3} \left(1-p^{i+1}\right)^{2 (k-2-i)} \, \prod _{i=0}^{k-3} \left(N p^i - D\right)^{2 (k-2-i)}
\cdot \left(N + p^j\right)^{2 k-3} \, \prod _{i=0}^{k-3} \left(N + p^{i+j+1}\right)^{2 (k-2-i)}}{\left(\prod _{i=0}^{k-3} \left(D+p^{i+j}\right)\right)^{2 (k-1)}  \left(D+p^{j+k-2}\right)^{2 k-3} \left(\prod _{i=0}^{k-3} \left(D+p^{i+j+k-1}\right)^{2 k-4-2 i}\right)}}.
$$
We readily see that the right-hand side of \eqref{eq:Krattforus} has the form
\begin{eqnarray}
K \cdot\left(\frac{(N+p^{j+k-1})}{\left(D+p^{j+2 k-3}\right)}-\frac{\left(N + p^j\right)}{\left(D+p^{j+k-2}\right)}\right)&=& K \cdot \left( -\frac{p^{j+2} \left(p-p^k\right) \left(p^2 D-N p^k\right)}{\left(p^{j+k}+p^2 D\right) \left(p^{j+2 k}+p^3 D\right)}\right) \nonumber \\
&=& K \cdot \frac{p^j (1 - p^{k-1})(N p^{k-2} - D)}{(D + p^{j+k-2})(D + p^{j + 2k - 3})}.  \label{eq:RHSofKrattforus}
\end{eqnarray}
The verification that $\det M(k,j)$ is as in \eqref{eq:detMkj} is now a computation from \eqref{eq:Krattforus}, \eqref{eq:RHSofKrattforus}, and the induction hypothesis for the form of $\det M(k-2, j+2)$.  \pfend

With the determinant formula in hand, we may determine the divisions of Sector IV into subsectors on which we have various $k$--hyponormalities. \  Observe that if we are at any point where, for some $\ell = 0, 1, 2, \ldots$, $D = p^\ell N$, we will have the term $N p^\ell - D = 0$. \  In this case it is easy to show that the relevant shift is finitely atomic and subnormal, either by the trivial modification to the proof in \ref{th:fintomicSIV} or by realizing the moment matrices are all of finite rank and ``flat'' (see \cite{CF1}).

\begin{thm}
Let $p>1$ and suppose  $0 < N < D < 1$ and $D \neq p^k N$, $k = 0, 1,2, \ldots$. \  For the GWRS $W_\alpha$ arising from $p$ and the point $(N, D)$, and for any $k = 1, 2, \ldots$, $W_\alpha$ is $k$--hyponormal if and only if $D > p^{k-1} N$. \  It follows that if $N p^{k-1}< D < N p^k$, then $W_\alpha$ is $k$--hyponormal but not $(k+1)$--hyponormal.
\end{thm}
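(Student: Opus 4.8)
The plan is to reduce $k$--hyponormality to a sign computation for the determinants produced by \Cref{le:detMNj}. Since the positivity of the $k \times k$ matrix $M(k,j)$ encodes $(k-1)$--hyponormality, the shift $W_\alpha$ is $k$--hyponormal precisely when $M(k+1,j) \geq 0$ for every $j$. The top-left $m \times m$ corner of $M(k+1,j)$ is exactly $M(m,j)$ (both being Hankel with $(r,s)$ entry $\gamma_{j+r+s}/\gamma_j$), so the leading principal minors of $M(k+1,j)$ are the $\det M(m,j)$, $m=1,\ldots,k+1$, with $\det M(1,j)=1$. Because we have excluded the lines $D = p^\ell N$, every such determinant is nonzero; hence by the Nested Determinant Test, $M(k+1,j) \geq 0$ for all $j$ is equivalent to $\det M(m,j) > 0$ for all $m=1,\ldots,k+1$ and all $j$ (while a single negative leading principal minor already forces the matrix to fail to be positive semidefinite). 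The whole theorem therefore rests on the signs of the $\det M(m,j)$.

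First I would observe that the sign of $\det M(m,j)$ does not depend on $j$. In the closed form \eqref{eq:detMkj} (and in \eqref{eq:detM2j}, \eqref{eq:detM3j}) every factor carrying $j$ -- the powers of $p^j$, the factors $(N+p^{i+j})$, and all denominator factors of the form $(D + p^{\cdots+j})$ -- is strictly positive since $N,D>0$ and $p>1$. The only sign-bearing factors are the $j$--free products $\prod_{i=0}^{m-2}(1-p^{i+1})^{m-i-1}$ and $\prod_{i=0}^{m-2}(Np^i - D)^{m-i-1}$. The first is a product of negative numbers with total exponent $\sum_{i=0}^{m-2}(m-i-1) = \frac{m(m-1)}{2}$, so its sign is $(-1)^{m(m-1)/2}$ irrespective of $(N,D)$.

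Next I would locate $(N,D)$ by choosing the integer $\ell \geq 0$ with $p^\ell < D/N < p^{\ell+1}$ (which exists since $1 < D/N < \infty$ and $D/N \neq p^k$ for all $k$). Then $Np^i - D < 0$ exactly when $i \leq \ell$. If $m \leq \ell+2$, then $m-2 \leq \ell$, so every factor of $\prod_{i=0}^{m-2}(Np^i-D)^{m-i-1}$ is negative and its sign is again $(-1)^{m(m-1)/2}$; multiplying by the first product gives $\det M(m,j) > 0$. If instead $m = \ell+3$, then exactly one factor, the one with $i=\ell+1=m-2$, is positive, and it carries exponent $m-i-1 = 1$; this removes precisely one unit from the exponent of the negative factors, so the sign of the second product becomes $-(-1)^{m(m-1)/2}$ and hence $\det M(\ell+3,j) < 0$ for every $j$.

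Combining these facts yields the theorem. The conditions $\det M(m,j) > 0$ hold for all $1 \leq m \leq \ell+2$, so $M(\ell+2,j) \geq 0$ for every $j$ and $W_\alpha$ is $(\ell+1)$--hyponormal; while $\det M(\ell+3,j) < 0$ shows $M(\ell+3,j)$ is not positive semidefinite, so $W_\alpha$ is not $(\ell+2)$--hyponormal. Since $k$--hyponormality is inherited downward, $W_\alpha$ is $k$--hyponormal exactly for $k \leq \ell+1$. Finally, using $p^\ell < D/N < p^{\ell+1}$ and $D/N \neq p^{k-1}$ one checks $D > p^{k-1}N \iff k-1 \leq \ell \iff k \leq \ell+1$, which is the asserted equivalence; specializing to $\ell = k-1$ gives the concluding statement that in $Np^{k-1} < D < Np^k$ the shift is $k$--hyponormal but not $(k+1)$--hyponormal. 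I expect the main obstacle to be the parity bookkeeping for the second product -- in particular, verifying that crossing from the ``all factors negative'' regime into the first subsector just below flips the overall sign by exactly one unit of exponent -- rather than any conceptual difficulty, since the determinant formula of \Cref{le:detMNj} carries the computational weight.
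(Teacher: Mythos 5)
Your proposal is correct and follows essentially the same route as the paper: both rest entirely on the closed-form determinant of Lemma \ref{le:detMNj}, use the Nested Determinant Test (legitimately, since off the lines $D = p^{\ell}N$ no determinant vanishes), and decide positivity by the signs of the two $j$--independent products $\prod_{i}(1-p^{i+1})^{m-i-1}$ and $\prod_{i}(Np^{i}-D)^{m-i-1}$. The only difference is organizational: where the paper runs an induction on $k$ tracking the two ``new'' factors $1-p^{k+1}$ and $Np^{k}-D$ at each step, you count all exponent parities at once after locating $D/N$ between $p^{\ell}$ and $p^{\ell+1}$ -- a slightly cleaner packaging of the same sign bookkeeping, and your count (total exponent $m(m-1)/2$ for each product, with exactly one positive factor of exponent $1$ appearing first at $m=\ell+3$) checks out.
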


\begin{proof} \  Recall that for $k$--hyponormality we require that the matrices $M(n, m)$ be positive (semi-) definite for $n = 2, \ldots, k + 1$ and for $m = 0, 1,2, \ldots$. \  One sees readily that with the assumption $D \neq p^k N$, $k = 0, 1, 2, \ldots$, none of the relevant determinants are zero and thus we may detect matrix positivity by determinant positivities, using the Nested Determinant Test.

The claim is obvious for $k = 1$ from  \eqref{eq:detM2j}, for $k = 2$ from  \eqref{eq:detM3j}, and for $k = 3$ from \eqref{eq:detMkj} in the case $k = 4$;  we proceed using induction on $k$ starting from this. \  Suppose then that the claim holds for some $k > 4$. \  For $(k + 1)$--hyponormality we must have $k$--hyponormality and also examine the determinant of $M(k + 2, m)$ for all $m$. \  In the expression \eqref{eq:detMkj} for $\det M(k,j)$ it is clear that only the terms
$$
\left(\prod _{i=0}^{k-2} \left(1-p^{i+1}\right)^{k-i-1}\right) \left(\prod _{i=0}^{k-2} \left(N p^i-D\right)^{k-i-1}\right)
$$
can possibly induce a negative term. \  There are clearly $2(k-1)$ such potentially negative terms, raised of course to various powers. \ Under the assumption of $k$--hyponormality, we have the total sign of the $2*(k+1-1)= 2k$ such terms (with their powers) in $\det M(k+1, m)$ as positive for all $m$. \  If we move to the determinant of $M(k + 2, m)$, we have each of these previous terms with its power increased by one, but this induces no sign change from the determinant of $M(k + 1, m)$;  we also have two ``new'' terms appearing, namely $1-p^{k+2-1} = 1 - p^{k+1}$ and $ N p^{k+2-2} - D = N p^{k+1-1} - D$. \  The first of these is assuredly negative;  in order that the total sign be positive, the second must be as well, which is exactly $D > N p^{k+1-1}$ as as claimed to be required for $(k+1)$--hyponormality. \ It is clear that this requirement is both necessary and sufficient for $(k+1)$--hyponormality, since we already have $D > N p^{k-1}$.

The second claim follows immediately from the first.
\end{proof}

We remark that while the determinant formula in Lemma \ref{le:detMNj} clearly involves $j$, the sign of the determinant for a particular $k$, is uniform in $j$.

Observe also that many of the results carry implicitly that properties are \textbf{radial} within the unit square, in the sense that if the shift associated with $(N,D)$ has some property then so does the shift associated with $(\lambda N, \lambda D)$ for $\lambda \geq 0$ and such that $(\lambda N, \lambda D)$ remains in the unit square. \  It is to be pointed out that many of the proofs, in fact, establish this explicitly, and this is somewhat remarkable in that scaling of $(N,D)$ does not amount merely to scaling the weights (or moments). \  In the determinant formula in Lemma \ref{le:detMNj}, for example, examination of the result of replacing $(N,D)$ by some $(\lambda N, \lambda D)$ remaining in the unit square shows that this does not change the sign of the determinant:  terms of the form
$$\left(\lambda N p^j - \lambda D\right)^{k-j-1}$$
are homogeneous in $\lambda$ and do not change sign, and the signs of other terms that could conceivably change do not with the standing assumption that $p > 1$. \ Similarly, Theorem \ref{th:1ovpxmaisCM}, Lemma \ref{le:afnisCM}, and Theorem \ref{th:pxpaovpxpblogB} are unchanged by a radial change in relevant constants, and so the properties in Sector I of weights squared interpolated by a Bernstein function and in Sector II of weights squared interpolated by a log Bernstein function are intrinsically radial. \  Of course subnormality, as $k$--hyponormality for all $k$, is likewise radial.

This point of view provides additional evidence that the shifts in Sector III, known to be subnormal, are not $\mathcal{M I D}$. \  In experiments with \textit{Mathematica} \cite{Wol} for $p = 3/2$, for example, we can discard much of Sector III from being $\mathcal{M I D}$ because we can check on the upper boundary $D = 1$ that the relevant shifts do not even have a subnormal (Schur) square root.

\medskip

\subsection{Affine subshifts of subnormal and $\mathcal{MID}$ shifts}  \label{subse:remarks}

Certain of the GRWS provide tractable examples of shifts whose affine subshifts need not be subnormal.  Let (affine)  $g$ be of the form $g(n) = \ell n + r$, with $\ell$ a positive integer and $r$ a non-negative integer. The associated subshift of the GRWS with weights
$$\alpha_n = \sqrt{\frac{p^n + N}{p^n + D}}$$
is then the shift with weights
\begin{eqnarray*}
\hat{\alpha}_n &=& \sqrt{\frac{p^{\ell n+ r} + N}{p^{\ell n+ r} + D}}\\
&=&\sqrt{\frac{p^{\ell n} + \frac{N}{p^r}}{p^{\ell n}+ \frac{D}{p^r}}}\\
&=&\sqrt{\frac{{(p^{\ell})}^n + \frac{N}{p^r}}{{(p^{\ell})}^n+ \frac{D}{p^r}}}.
\end{eqnarray*}
This is another GRWS, with parameter $p^\ell$ and corresponding to the point $(\frac{N}{p^r}, \frac{D}{p^r})$ in the associated Magic Square, and so its subnormality or lack thereof is understood.  In particular, it is easy to compute that starting on one of the special lines $D = p^k N$ in Sector IV for the original there are choices of $k$ and $\ell$ so that the resulting shift is in Sector IV, but not on one of the special lines, and is therefore not subnormal.  In fact, one may choose $k$ and $\ell$ so as to have the resulting shift $j$-hyponormal but not $(j+1)$-hyponormal for any $j \geq 1$.  Thus there is a rich collection of examples showing to what degree a subnormal shift may lack subnormality of an affine subshift.

Observe that the same computation shows that if we begin with a GRWS in Sectors I or II, and hence $\mathcal{MID}$, the resulting affine subshift is again in the same sector (but now corresponding of course to $p^\ell$) and is therefore still $\mathcal{MID}$.  Starting in Sector III, the new shift is again in Sector III and so subnormality was retained even in the absence of the stronger $\mathcal{MID}$ property.  The preservation of subnormality under the taking of an affine subshift is therefore, in general, quite wide open.

However, for the class of $\mathcal{MID}$ shifts, the conjecture suggested by the behavior of the $\mathcal{MID}$ GRWS is correct:  $\mathcal{MID}$, and thereby subnormality, is preserved under the taking of affine subshifts.

We first require a lemma.  Let $T$ be a bounded linear operator on $\mathcal{H}$.
Consider $M_{T}$,  an operator on $\mathcal{L}(\mathcal{H})$, defined by $M_{T}(X)=T^*XT$.  We have the following.

\begin{lem}
With $M_{T}$ as above, for every $X\in \mathcal{L}(\mathcal{H})$ and every integer i, $M_{T}^i(X)=(T^i)^*XT^i$ and thus $M_{T}^i=M_{T^i}$.  Further,
\begin{equation}  \label{eq:variousM}
(I-M_{T^k})^n = \sum_{j=0}^{n(k-1)} c_j  \big(  \sum_{l=0}^{n} (-1)^l\binom{n}{l}M_{T^{l+j}} \big)
\end{equation}
where we do not specify the $c_j$ except to note that they are positive.  Finally, for fixed integers $k$ and $i_0$,
\begin{equation}\label{eq PG}
\sum_{i=0}^{n}(-1)^i\binom{n}{i} (T^*)^{k(i+m)+i_0}T^{k(i+m)+i_0}
=
\sum_{j=0}^{n(k-1)} c_j  \Big(  \sum_{l=0}^{n} (-1)^l\binom{n}{l}   (T^*)^{km+l+j+i_0}T^{km+l+j+i_0}        \Big).
\end{equation}
\end{lem}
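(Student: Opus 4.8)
The plan is to lift all three assertions to the level of the ``superoperator'' $M_T$ acting on $\mathcal{L}(\mathcal{H})$, and to reduce the whole lemma to a single elementary polynomial identity into which we substitute $M_T$ at the end. First I would dispatch the opening claim by the obvious induction: assuming $M_T^i(X)=(T^i)^*XT^i$, one computes $M_T^{i+1}(X)=M_T\big((T^i)^*XT^i\big)=T^*(T^i)^*XT^iT=(T^{i+1})^*XT^{i+1}$, so $M_T^i=M_{T^i}$. The crucial consequence, which I would record explicitly for later use, is that $M_{T^a}M_{T^b}=M_{T^{a+b}}=M_{T^b}M_{T^a}$; thus all the maps $M_{T^a}$ ($a\ge 0$) commute, $M_{T^k}=(M_T)^k$, and $M_{T^m}=(M_T)^m$. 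This lets me treat $M_T$ as a single commuting indeterminate and apply ordinary polynomial calculus.

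For the identity \eqref{eq:variousM} I would begin from the scalar factorization $1-x^k=(1-x)\big(1+x+\cdots+x^{k-1}\big)$, raise it to the $n$-th power, and define the $c_j$ as the coefficients of the second factor:
\[
(1-x^k)^n=\Big(\sum_{l=0}^{n}(-1)^l\binom{n}{l}x^l\Big)\Big(\sum_{j=0}^{n(k-1)}c_j\,x^j\Big)
=\sum_{j=0}^{n(k-1)}c_j\sum_{l=0}^{n}(-1)^l\binom{n}{l}x^{l+j}.
\]
Substituting $x=M_T$ is legitimate precisely because the $M_{T^a}$ commute and $M_{T^k}=(M_T)^k$, $M_{T^{l+j}}=(M_T)^{l+j}$; this yields \eqref{eq:variousM} verbatim. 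The only point needing a short separate argument is the strict positivity of the $c_j$: interpreting $c_j$ as the number of tuples $(a_1,\dots,a_n)\in\{0,\dots,k-1\}^n$ with $\sum_i a_i=j$, one sees $c_j\ge 1$ for every $j$ in the range $0\le j\le n(k-1)$, since such a tuple can always be produced, hence $c_j>0$.

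For \eqref{eq PG} I would recognize each summand on the left as $M_T$ applied to the identity. Using $(T^*)^aT^a=M_{T^a}(I)$ and $k(i+m)+i_0=ki+(km+i_0)$, we have $(T^*)^{k(i+m)+i_0}T^{k(i+m)+i_0}=(M_{T^k})^i\,M_{T^{km+i_0}}(I)$, so the left-hand side equals
\[
\Big(\sum_{i=0}^{n}(-1)^i\binom{n}{i}(M_{T^k})^i\Big)\big(M_{T^{km+i_0}}(I)\big)=(I-M_{T^k})^n\big(M_{T^{km+i_0}}(I)\big).
\]
Applying the already-proved operator identity \eqref{eq:variousM} to the element $M_{T^{km+i_0}}(I)=(T^*)^{km+i_0}T^{km+i_0}$ and using $M_{T^{l+j}}M_{T^{km+i_0}}=M_{T^{km+l+j+i_0}}$ gives exactly $\sum_{j=0}^{n(k-1)}c_j\sum_{l=0}^{n}(-1)^l\binom{n}{l}(T^*)^{km+l+j+i_0}T^{km+l+j+i_0}$, which is the right-hand side of \eqref{eq PG}.

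I expect the main obstacle to be conceptual rather than computational: the key insight is that the entire lemma is the image, under the commutative functional calculus $x\mapsto M_T$, of the single factorization $1-x^k=(1-x)(1+\cdots+x^{k-1})$; once this is seen, Parts (1)--(3) are essentially bookkeeping in the commuting variable $M_T$. The one genuinely necessary verification beyond this is the strict positivity of the coefficients $c_j$ across the full index range $0\le j\le n(k-1)$, which the tuple-counting interpretation settles cleanly.
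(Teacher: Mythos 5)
Your proposal is correct and follows essentially the same route as the paper: the identity \eqref{eq:variousM} is obtained there by exactly your factorization, written at the operator level as $(I-M_{T^k})^n=(I-M_T^k)^n=\bigl(\sum_{j=0}^{k-1}M_{T^j}\bigr)^n(I-M_T)^n$, and the paper derives \eqref{eq PG} by applying \eqref{eq:variousM} to $I$ and then multiplying on the left by $(T^*)^{km+i_0}$ and on the right by $T^{km+i_0}$ --- which is the same step as your application of $M_{T^{km+i_0}}$, by commutativity. Your only addition is the explicit tuple-counting verification that every $c_j$ with $0\le j\le n(k-1)$ is strictly positive, which the paper simply asserts.
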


\noindent Proof.  The first claim is just a computation.   For the second, with a fixed integer $k \geq 2$, we may compute
$$
\begin{array}{lcl}
(I-M_{T^k})^n &= & \sum_{i=0}^{n}(-1)^i\binom{n}{i} M_{T^{ki}} \\
   &= & (I-M_{T}^k)^n \\
   &= & \big(\sum_{j=0}^{k-1} M_{T^j} \big)^n (I-M_{T})^n  \\
 &= & \sum_{j=0}^{n(k-1)} c_j M_{T^j}   \sum_{l=0}^{n} (-1)^l\binom{n}{l}M_{T^l}  \\
  &= & \sum_{j=0}^{n(k-1)} c_j  \big(  \sum_{l=0}^{n} (-1)^l\binom{n}{l}M_{T^{l+j}} \big) \\
\end{array}
$$
and observe the $c_j$ are positive as asserted.

Using the second claim as applied to the identity operator $I$ yields
$$\sum_{i=0}^{n}(-1)^i\binom{n}{i} (T^*)^{ki}T^{ki}
=
\sum_{j=0}^{n(k-1)} c_j  \Big(  \sum_{l=0}^{n} (-1)^l\binom{n}{l}   (T^*)^{l+j}T^{l+j}        \Big),$$
and multiplying this on the left by $(T^*)^{km+i_0}$ and on the right by $T^{km+i_0}$ yields the third claim.  \pfend

We next show that affine subshifts of $\mathcal{MID}$ shifts are $\mathcal{MID}$.  The motivation for the proof is to realize, using the characterization of $\mathcal{MID}$ shifts as those whose weights are log completely alternating, that the following sample computation works in general.  A ``test'' of some log alternating property for the subshift may be written in terms of ``tests'' for log completely alternating of the original shift:
$$\begin{array}{ccccccc}
\ln \alpha_1 & \hspace*{.2in}     &- 3\ln \alpha_3 &\hspace*{.2in}  & +3 \ln \alpha_5 & \hspace*{.2in} &-\ln \alpha_7 \hspace{.1in} =\\
&&&&&&\\
(\ln \alpha_1& -3 \ln \alpha_2& +3 \ln \alpha_3&-\ln \alpha_4)&&& \\
&&&&&&\\
+ &3(\ln \alpha_2&-3 \ln \alpha_3& + 3 \ln \alpha_4 & - \ln \alpha_5) &&\\
&&&&&&\\
&+ &3(\ln \alpha_3 & -3 \ln \alpha_4 & +3 \ln \alpha_5 & - \ln \alpha_6)& \\
&&&&&&\\
&&+&(\ln \alpha_4 &-3 \ln \alpha_5 & +3 \ln \alpha_6& - \ln \alpha_7).\\
\end{array}$$

\begin{thm}
Let $W_\alpha$ be an $\mathcal{MID}$ weighted shift.  Then any affine subshift of $W_\alpha$ is $\mathcal{MID}$.
\end{thm}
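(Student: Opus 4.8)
The plan is to route everything through the characterization, recorded earlier in the paper, that the \emph{contractive} $\mathcal{MID}$ shifts are exactly those whose weights squared form a log completely alternating sequence. First I would reduce to the contractive case. Multiplying a shift by a positive scalar $c$ sends the Schur $s$--power $W_{\alpha^s}$ to $c^s W_{\alpha^s}$, which is subnormal precisely when $W_{\alpha^s}$ is, so the class $\mathcal{MID}$ is invariant under positive scaling; moreover the affine subshift of $cW_\alpha$ is $c$ times the affine subshift of $W_\alpha$, and a contractive $W_\alpha$ has a contractive subshift. Hence I may rescale so that $W_\alpha$ is contractive, prove the (contractive) affine subshift is $\mathcal{MID}$, and rescale back. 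With $W_\alpha$ contractive and $\mathcal{MID}$, the sequence $L := (\ln \alpha_n^2)_n$ is completely alternating, i.e. $(\nabla^m L)_k \le 0$ for every $m \ge 1$ and every $k \ge 0$.

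Next I would express the desired conclusion for the subshift in the same language. Writing $g(n) = \ell n + r$, the affine subshift has weights squared $\hat\alpha_n^2 = \alpha_{\ell n + r}^2$, so its log weights squared sequence is the decimation $\hat L_n = L_{\ell n + r}$, and by the characterization it suffices to show $\hat L$ is completely alternating. The key step is the identity
\begin{equation*}
(\nabla^m \hat L)_k = \sum_{j=0}^{m(\ell-1)} c_j\, (\nabla^m L)_{\ell k + r + j}, \qquad m \ge 1,\ k \ge 0,
\end{equation*}
with all $c_j \ge 0$. This is the sequence form of the operator identity in the preceding lemma: writing $S$ for the sequence shift and $\nabla = I - S$, one factors $(I - S^\ell)^m = \bigl(\sum_{j=0}^{\ell-1} S^j\bigr)^m (I-S)^m = \sum_{j} c_j\, S^j (I-S)^m$, where the $c_j$ are the manifestly nonnegative coefficients of $(1 + x + \cdots + x^{\ell-1})^m$. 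Evaluating $(I - S^\ell)^m L$ at index $\ell k + r$ yields the displayed formula, which is the general form of the sample computation recorded above (take $\ell = 2$, $m = 3$, $r = 1$, $k = 0$; the operator version with $(T^*)^aT^a$, namely the lemma's third identity, gives the same relation phrased at the level of moments should one prefer an $n$--contractivity argument). Since each $(\nabla^m L)_{\ell k + r + j} \le 0$ and each $c_j \ge 0$, the right-hand side is $\le 0$; thus $\hat L$ is completely alternating, the subshift's weights squared are log completely alternating, and the subshift is $\mathcal{MID}$.

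I expect the only genuine obstacle to be the bookkeeping in the factorization step -- getting the index shifts right and confirming that the coefficients $c_j$ that appear are nonnegative -- but this is exactly what the preceding lemma supplies, so once it is in hand the deduction is short. A secondary point to handle with care is the reduction to the contractive case together with the clean invocation of the ``$\mathcal{MID}$ if and only if weights squared log completely alternating'' characterization, since that characterization is stated for contractive shifts whereas the theorem is phrased for general $\mathcal{MID}$ shifts. No deeper difficulty arises, because the conceptual crux -- that passing to an arithmetic subsequence preserves complete alternation -- rests entirely on the nonnegativity of the coefficients of $(1 + x + \cdots + x^{\ell-1})^m$.
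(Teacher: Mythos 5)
Your proposal is correct and is essentially the paper's own argument: the paper likewise reduces to the log completely alternating characterization after scaling, and its key lemma is exactly your factorization $(I-S^{\ell})^{m}=\bigl(\sum_{j=0}^{\ell-1}S^{j}\bigr)^{m}(I-S)^{m}$ with the nonnegative coefficients of $(1+x+\cdots+x^{\ell-1})^{m}$, phrased there for the maps $M_{T}(X)=T^{*}XT$ and then transferred to sequences by applying it as a quadratic form at $e_{0}$ of an auxiliary shift with moment sequence $\beta_{n}=\ln\alpha_{n}/\ln\alpha_{0}$. Your only deviation is a harmless streamlining --- you run the same identity directly on the sequence $L=(\ln\alpha_{n}^{2})$ via the shift $S$, bypassing the auxiliary operator and normalization --- so the two proofs coincide in substance.
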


\noindent Proof.  We use the characterization of $\mathcal{MID}$ as those whose weights are log completely alternating, and the task is to justify in general the claim for the motivating computation above.  Consider $W_\alpha$ scaled so that all the weights are strictly less than $1$;  this will not change whether the shift or an affine subshift is $\mathcal{M I D}$, and we will retain $\alpha$ for the sequence of weights so scaled. Recall that under our conventions each $\alpha_n$ is strictly positive;  then each $\ln \alpha_n$ is negative.  Consider the sequence $\beta = \{\beta_n\}_{n=0}^\infty$ defined by $\beta_n := \frac{\ln \alpha_n}{\ln \alpha_0}$.  Fix $i_0$ a non-negative integer and $k\geq 2$ a positive integer and let the subshift of $W_\alpha$ be induced by $g(j) = k j + i_0$.  Then the induced subsequence of the weights is $\tilde{\alpha}$ defined by $\tilde{\alpha}_n = \alpha_{k n + i_0}$.  Clearly $\widetilde{\beta}$ defined by $\widetilde{\beta}_n =\frac{\ln \alpha_{k n + i_0}}{\ln \alpha_0}$ is the affine subsequence of $\beta$ induced by the same $g$.

With $T$ the weighted shift having moment sequence $\beta$, and applying \eqref{eq PG} as a quadratic form to the vector $e_0$ in the canonical basis for the acting space of $T$, after converting to the moments $\beta$ we obtain
$$\nabla^n \widetilde{\beta} (m)=\sum_{i=0}^n (-1)^i \binom{n}{i}\beta_{k(m+i)+i_0}=
\sum_{j=0}^{n(k-1)} c_j  \nabla^m \beta(km+j+i_0).
$$
Multiplying both sides by $\ln \alpha_0$ yields that
$$\nabla^n \ln \widetilde{\alpha} (m)=\sum_{i=0}^n (-1)^i \binom{n}{i} \ln \alpha_{k(m+i)+i_0}=
\sum_{j=0}^{n(k-1)} c_j  (\nabla^m \ln \alpha)(km+j+i_0).
$$
Recalling the $c_j$ are positive, we see that since each $(\nabla^m \ln \alpha)(km+j+i_0)$ is non-positive (since $(\alpha)$ is log completely alternating), $\nabla^n \ln \widetilde{\alpha} (m)$ is non-positive.  Since this holds for each positive integer $n$ and non-negative integer $m$, the sequence $\tilde{\alpha}$ is log completely alternating as desired.  \pfend

\medskip

We note again, using the example of a GRWS from Sector III, that subnormality of affine subshifts may be preserved in some cases even without the stronger $\mathcal{MID}$ property.

\vspace{.1in}

\noindent {\bf Acknowledgments}. \ The authors wish to express their appreciation for support and warm hospitality during various visits (which materially aided this work) to Bucknell University, the University of Iowa, and the Universit\'{e} des Sciences et Technologies de Lille, and particularly the Mathematics Departments of these institutions. \ Several examples in this paper were obtained using calculations with the software tool \textit{Mathematica} \cite{Wol}. \ We wish as well to thank Michael Harvey, of Bucknell University, for his generous assistance in using the computer cluster to gather examples leading to the correct conjecture for the general determinant formula in Lemma \ref{le:detMNj}.

\end{document}